\title{Induced representations and harmonic analysis on finite groups}
\author{Fabio Scarabotti, Filippo Tolli}
\renewcommand{\square}{\circ}
 \newtheorem{definition}{Definition} [section]       
 \newtheorem{remark}[definition]{Remark}
 \newtheorem{proposition}[definition]{Proposition}       
 \newtheorem{theorem}[definition]{Theorem}       
 \newtheorem{corollary}[definition]{Corollary}       
  \newtheorem{lemma}[definition]{Lemma}
\newcommand{\Ind}{\mbox{\rm Ind}}
\newcommand{\Hom}{\mbox{\rm Hom}}
\newcommand{\Res}{\mbox{\rm Res}}
\newcommand{\tr}{\mbox{\rm tr}}
\newcommand{\ran}{{\rm Ran}}
\def\TT{{\mathcal{T}}}
\def\TW{\overset{\wedge}{T}}
\def\LV{\overset{\vee}{L}}
\def\CC{{\mathbb{C}}}
\begin{document}
\maketitle

\begin{abstract}  
Given a finite group $G$ and a subgroup $K$, we study the commutant of $\text{Ind}_K^G\theta$, where $\theta$ is an irreducible $K$-representation. After a careful analysis of Frobenius reciprocity, we are able to introduce an orthogonal basis in such commutant and an associated Fourier transform. Then we translate our results in the corresponding Hecke algebra, an isomorphic algebra in the group algebra of $G$. Again a complete Fourier analysis is developed and, as particular cases, we obtain some results of Curtis and Fossum on the irreducible characters of Hecke algebras. Finally, we develop a theory of Gelfand-Tsetlin bases for Hecke algebras. 
\footnote{{\it AMS 2010 Math. Subj. Class.}Primary: 20C15. Secondary: 20C08, 43A90

\it Keywords: Induced representation, Frobenius reciprocity, Fourier transform, Hecke algebra, spherical function, Gelfand-Tsetlin basis}
\end{abstract}

\section{Introduction}
Let $G$ be a finite group and  denote by $\widehat{G}$ a complete set of irreducible, pairwise inequivalent unitary representations of $G$ and by $L(G)$ its group algebra. The dimension of $\sigma\in\widehat{G}$ is denoted by $d_\sigma$ and $M_{d,d}(\mathbb{C})$ is the algebra of all $d\times d$ complex matrices.
One of the key facts in the representation theory of $G$ is the isomorphism

\begin{equation}\label{introisom1}
L(G)\cong \bigoplus_{\sigma\in\widehat{G}}M_{d_\sigma,d_\sigma}(\mathbb{C}),
\end{equation} 
which is given explicitly by the Fourier transform; see \cite{book}, Section 9.5. Now suppose that $K$ is a subgroup of $G$, denote by $X=G/K$ the corresponding homogeneous space  and by $L(X)$ the permutation module of all complex valued functions defined on $X$. Then we have the isomorphism
\begin{equation}\label{introisom2}
\text{Hom}_G(L(X),L(X))\cong \bigoplus_{\sigma\in J}M_{m_\sigma,m_\sigma}(\mathbb{C}),
\end{equation} 
where $J$ is the set of all $\sigma \in \widehat{G}$  contained in $L(X)$ and $m_\sigma$ the multiplicity of $\sigma$ in  $L(X)$; see again \cite{book}, Section 9.4. Clearly, \eqref{introisom1} is a particular case of \eqref{introisom2}, because $\text{Hom}_G(L(G),L(G))\cong L(G)$. The (spherical) Fourier transform in the setting of \eqref{introisom2} has been extensively studied when $(G,K)$ is a Gelfand pair, that is when the algebra $\text{Hom}_G(L(X),L(X))$ is commutative, which is equivalent to say that  $L(X)$ is multiplicity free. This analysis is based on the thery of spherical functions. There are several accounts on this subject and on its many applications; see \cite{Bump, GelGeorg, book,Diaconis, Macdonald,Terras}. In \cite{st1,st2,st3} we extended the theory of spherical Fourier transforms to homogeneous spaces with multiplicity and gave several applications, mainly to probability and statistics (an earlier example may be found in \cite{ScarabottiLapBer}). In particular, we showed that  multiplicity freeness is not an essential tool in order to develop a satisfactory theory and to perform explicit calculations. \\

In the present paper we face a more general problem. Suppose that $\theta$ is an irreducible $K$-representation. Then we have again
\begin{equation}\label{introisom3}
\text{Hom}_G(\text{Ind}_K^G\theta,\text{Ind}_K^G\theta)\cong \bigoplus_{\sigma\in J}M_{m_\sigma,m_\sigma}(\mathbb{C}),
\end{equation} 
where $J$ is the set of all $\sigma \in \widehat{G}$  contained in $\text{Ind}_K^G\theta$ and $m_\sigma$ is  the multiplicity of $\sigma$ in  $\text{Ind}_K^G\theta$. We  introduce a Fourier transform that gives an explicit isomorphism for \eqref{introisom3}.
The irreducible characters of the  algebra $\text{Hom}_G(\text{Ind}_K^G\theta,\text{Ind}_K^G\theta)$ were computed by C.W. Curtis and T.V. Fossum in \cite{CURFOS}. Accounts of their theory may be found in \cite{CR2} and in the recent expository paper \cite{st4}, where it is presented as a generalization of the theory of spherical functions of finite Gelfand pairs. But the results of Curtis and Fossum can be used only for the Fourier analysis of functions in the {\em center} of the algebra. In our approach, a complete set of matrix coefficients are obtained and the results of Curtis and Fossum may be derived in a more transparent form.\\

The plan of the paper is the following. Section \ref{Secprel} is devoted to fix notation and to introduce one of the 
key ideas of the paper: suitable scalar products are used not only in the representation spaces but also in the space of intertwining operators (normalized Hilbert-Schmidt scalar products). This leads to several natural orthogonality relations: in Section \ref{SecorthFrob} these are obtained by a detailed analysis of Frobenius reciprocity. 
The results may be summarized in a commutative diagram of isomorphisms that are either isometries or multiples of isometries. In particular, for $\sigma\in J$, the {\em explicit} isomorphism between $\text{Hom}_K(\theta,\text{Res}^G_K\sigma)$ and $\text{Hom}_G(\sigma,\text{Ind}^G_K\theta)$ and a particular choice of an {\em orthonormal} basis in $\text{Hom}_K(\theta,\text{Res}^G_K\sigma)$ lead to an {\em explicit} decomposition of the $\sigma$-isotypic component in $\text{Ind}^G_K\theta$. In the particular case of Gelfand pairs, this corresponds to the choice of a $K$-invariant vector in each spherical representation and to the use of the spherical functions to decompose the permutation representation; see Section 4.6 in \cite{book}. In Section \ref{Seccommutant} the results on Frobenius reciprocity are used to get a natural orthogonal basis in $\text{Hom}_G(\text{Ind}_K^G\theta,\text{Ind}_K^G\theta)$. The associated Fourier transform is the first explicit form of \eqref{introisom3}. In \cite{CURFOS,CR2,st4} the Hecke algebra was introduced as a subalgebra of $L(G)$, then, using the theory of idempotents in group algebras,  $\text{Ind}^G_K\theta$ was identified with a subspace of $L(G)$. In Section \ref{SecHecke} of the present paper we use a different approach: the theory developed in Section \ref{SecorthFrob} naturally yields an isometric immersion of $\text{Ind}^G_K\theta$ in $L(G)$ and this isometry may be used as a tool to translate the harmonic analysis in Section \ref{Seccommutant} into a harmonic analysis of the Hecke algebra. Adapted bases for the irreducible representations of $G$ involved in the decomposition of $\text{Ind}^G_K\theta$ yield a complete set of matrix coefficients. In Section \ref{SecGelTse} we develop a theory of Gelfand-Tsetlin bases: when it is applicable, it leads to a natural orthonormal basis for $\text{Hom}_K(\theta,\text{Res}^G_K\sigma)$ and  to a corresponding basis for the $\sigma$-isotypic component of $\text{Ind}_K^G\theta$. The first one is obtained by means of iterated restrictions, while the second one  is obtained by iterated inductions.\\

It should be interesting to examine the case in which $K$ is a normal subgroup using Clifford theory (see \cite{Clifford}).  Another direction of research might be the extension of our results for permutation representations of wreath products (see \cite{tree, wreath} and \cite{st1}) to induced representations.  A parallel theory was developed by D'Angeli  and  Donno in \cite{DD1,DD2,DD3} by generalizing some constructions that arise in  the setting of {\it association schemes}; from the point of view of special functions see \cite{MIZ000}. \\

All the prerequisites for this paper may found in our books \cite{book,book2} and in our survey papers \cite{Mackey,st4}.  Motivations may be found in our preceding papers \cite{ScarabottiLapBer,st1,st2,st3}, where only permutation representations were studied and applied. Concrete examples of spherical functions associated with  induced representations are in \cite{ Mizukawa, Stembridge}, but the authors of these papers consider only multiplicity free induced representations of one-dimensional representations.\\


\section{Preliminaries}\label{Secprel}

In this section, in order to fix notation, we recall some basic facts on linear operators on finite dimensional spaces and on the representation theory of finite groups.
The scalar product on a finite dimensional Hermitian vector space $V$ is denoted by $\langle \cdot, \cdot\rangle_V$ and the associated norm by $\lVert\cdot \rVert_V$; we usually  omit the subscript if it is clear from the context the vector space considered. All the vector spaces will be Hermitian, and therefore we will omit this adjective.
Given two finite dimensional vector spaces $W,U$ we denote by $\Hom(W,U)$ the vector space of all linear maps  from $W$ to $U$ and for $T \in \Hom(W,U)$ we denote by $T^*$ the adjoint of $T$. We define a (normalized Hilbert-Schmidt) scalar product on  $\Hom(W,U)$ by setting 

\[
\langle T_1, T_2 \rangle_{\Hom(W,U)} = \frac{1}{\dim W}\tr(T^*_2T_1)
\]
for all $T_1, T_2 \in \Hom(W,U)$. Since $\tr(T_2^*T_1) = \tr(T_1T_2^*)$ we have

\begin{equation}\label{PS}
 \langle T_1, T_2 \rangle=\frac{\dim U}{\dim W}\langle T_2^*, T_1^* \rangle 
\end{equation}
In particular,   the map $\Hom(W,U) \ni T \mapsto\sqrt{ \frac{\dim U}{\dim W}}T^* \in \Hom(U,W)$ is a bijective isometry. Finally, note that if $I_W:W\rightarrow W$ is the identity operator then $\lVert I_W\rVert_{\Hom(W,W)}=1$.\\

We consider only {\em unitary representations} of finite groups and the adjective unitary will be usually omitted. If $\sigma$ is a representation of a finite group $G$ its dimension will be denoted by $d_\sigma$.
If $(\sigma,W)$ and $(\rho, U)$ are two representations of $G$
we denote by $\Hom_G(W,U) = \{T \in \Hom(W,U):  T\sigma(g) = \rho(g)T, \forall g \in G\}$, the space of all {\it intertwining operators}. Observe that if $T$ belongs to $\Hom_G(W,U)$  then $T^* $ belongs to $\Hom_G(U,W)$. 
Indeed, for all $g \in G$ we have

\begin{equation}\label{adjoint}
T^*\rho(g) = T^*\rho(g^{-1})^* =( \rho(g^{-1})T)^* = (T\sigma(g^{-1}))^* = \sigma(g^{-1})^*T^* = \sigma(g)T.
\end{equation}
If $(\sigma, W)$ is irreducible and $m = \dim \Hom_G(W, U)$  then $U$ contains  $m$ copies of $W$. In this case we say  that 
$T_1, T_2, \ldots, T_m \in \Hom_G(W,U)$ give rise to an {\it isometric orthogonal decomposition} of the $W$-{\it isotypic component} $mW$ of $U$ if for every $w_1, w_2 \in W$ and $i,j \in \{1,2,\ldots,m\}$  we have
\[
\langle T_iw_1, T_jw_2\rangle_U = \langle w_1, w_2\rangle_W\delta_{i,j}.
\]
This implies that the subrepresentation of $U$ isomorphic to $mW$ is equal to the orthogonal  direct sum
\[
T_1W\oplus T_2W \oplus \cdots \oplus T_mW
\]
and each operator $T_j$ is a isometry from $W$ to $\ran T_j \equiv T_jW$.

\begin{lemma}\label{lemma1}
Suppose that $(\sigma,W)$ is irreducible. Then the operators $T_1, T_2, \ldots, T_m$ give rise to an isometric  orthogonal decomposition of the $W$ component of $U$ if and only if  $T_1, T_2, \ldots, T_m$  form an orthonormal basis for $\Hom_G(W,U)$. Moreover, if this is the case, then we have:

\begin{equation}\label{orthdeltaij}
T_j^*T_i  = \delta_{i,j}I_W.
\end{equation}
\end{lemma}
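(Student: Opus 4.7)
The plan is to reduce both the isometric orthogonal decomposition condition and the orthonormal basis condition to the same identity $T_j^{*}T_i = \delta_{i,j}I_W$ via Schur's lemma, and in fact derive \eqref{orthdeltaij} simultaneously.

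First I would observe that since $T_i, T_j \in \Hom_G(W,U)$, by \eqref{adjoint} (applied in the proof of the paragraph containing it) we have $T_j^{*} \in \Hom_G(U,W)$, so the composition $T_j^{*}T_i$ belongs to $\Hom_G(W,W)$. Because $(\sigma,W)$ is irreducible, Schur's lemma yields a scalar $c_{ij} \in \CC$ with
\[
T_j^{*}T_i \;=\; c_{ij}\, I_W.
\]
Taking normalized Hilbert--Schmidt scalar products, and using $\tr(I_W) = \dim W$,
\[
\langle T_i, T_j\rangle_{\Hom(W,U)} \;=\; \frac{1}{\dim W}\tr(T_j^{*}T_i) \;=\; \frac{1}{\dim W}\cdot c_{ij}\dim W \;=\; c_{ij}.
\]
Thus the single identity $T_j^{*}T_i = \langle T_i,T_j\rangle\, I_W$ links the Gram matrix of the $T_i$'s to the scalars arising from Schur.

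Next I would translate the isometric orthogonal decomposition condition into the same language: for all $w_1,w_2 \in W$,
\[
\langle T_i w_1, T_j w_2\rangle_U \;=\; \langle T_j^{*}T_i w_1, w_2\rangle_W \;=\; c_{ij}\langle w_1, w_2\rangle_W.
\]
Hence $\langle T_i w_1, T_j w_2\rangle_U = \delta_{i,j}\langle w_1, w_2\rangle_W$ for all $w_1, w_2$ is equivalent to $c_{ij} = \delta_{i,j}$, which by the previous step is equivalent to $\langle T_i, T_j\rangle = \delta_{i,j}$. Finally, since by hypothesis $m = \dim \Hom_G(W,U)$, an orthonormal family of $m$ elements is automatically a basis, so both conditions are equivalent to the $T_i$'s forming an orthonormal basis of $\Hom_G(W,U)$. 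Equation \eqref{orthdeltaij} is precisely the identity $T_j^{*}T_i = c_{ij}I_W$ with $c_{ij} = \delta_{i,j}$.

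There is no real obstacle here: the only subtle point is the book-keeping that the normalization $\frac{1}{\dim W}$ in the Hilbert--Schmidt inner product matches the factor coming from $\tr(I_W) = \dim W$, so that the normalized scalar product reproduces exactly the Schur scalars $c_{ij}$ without any extra factor. This normalization choice is what makes the statement clean.
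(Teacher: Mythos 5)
Your proof is correct and follows essentially the same route as the paper's: apply Schur's lemma to $T_j^*T_i \in \Hom_G(W,W)$ to get scalars $c_{ij}$, then take traces to identify $c_{ij}$ with the normalized Hilbert--Schmidt inner product $\langle T_i, T_j\rangle$. The only difference is cosmetic: you phrase the argument symmetrically so that both directions of the equivalence drop out at once, whereas the paper proves one implication and remarks that the converse is trivial.
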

\begin{proof}
Suppose that $T_1, T_2, \ldots, T_m$  form an orthonormal basis for $\Hom_G(W,U)$.
By \eqref{adjoint} we know that
  $T^*_j \in \Hom_G(U,W)$. 
Therefore  $T_j^*T_i \in \Hom_G(W,W)$ and, by Schur's lemma, there exist $\lambda_{i,j} \in \CC$ such that  $T_j^*T_i = \lambda_{i,j}I_W$. By taking the traces of both sides, we get
\[
\delta_{i,j}d_\sigma = \tr(T_j^*T_i ) =  \lambda_{i,j} d_\sigma \Rightarrow \lambda_{i,j} = \delta_{i,j},
\]
that is \eqref{orthdeltaij}. Therefore, if $w_1, w_2 \in W$  then 
\[
\langle T_i w_1, T_jw_2 \rangle_U = \langle T_j^* T_i w_1, w_2 \rangle_W = \delta_{i,j}\langle w_1, w_2 \rangle_W.
\]
The converse  implication is trivial.
\end{proof}

Let $(\sigma,W)$ be a representation of $G$ and let $\{w_1,w_2, \ldots, w_{d_\sigma}\}$ an orthonormal basis of $W$.  The corresponding matrix coefficients are defined by setting 
\begin{equation}\label{stellap2}
u_{j,i}^\sigma(g) = \langle \sigma(g)w_i, w_j\rangle
\end{equation}
for $i,j = 1,2, \ldots, d_\sigma$ and $g \in G$.
If $\sigma, \rho$ are irreducible $G$-representations we set $\delta_{\sigma,\rho}= 1$, if $\sigma$ and $\rho$ are equivalent, otherwise we set $\delta_{\sigma,\rho}= 0$.

Let $L(G) = \{f: G \to \CC\}$ be the vector space of all complex valued functions defined on  $G$, endowed with the scalar product  
$\langle f_1, f_2\rangle = \sum_{g \in G}f_1(g)\overline{f_2(g)}$  for  $f_1, f_2 \in L(G)$.   It  also has  a natural  structure of algebra by defining the convolution product of   $f_1,f_2 \in L(G)$ as the function
$(f_1*f_2)(g) = \sum_{g_0\in G}f_1(gg_0^{-1})f_2(g_0)$, for all $g\in G$ .

\begin{proposition}
Let $(\sigma,W)$ and $(\rho,U)$ be irreducible $G$-representations. Then 
\begin {enumerate}
\item{
\begin{equation}\label{ORT}
 \langle u_{i,j}^\sigma, u_{h,k}^\rho\rangle= \frac{|G|}{d_\sigma} \delta_{\sigma, \rho}\delta_{i,h}\delta_{j,k} 
\mbox{   (orthogonality relations)}
\end{equation}
}
\item{
\begin{equation}\label{CON} 
u_{i,j}^\sigma* u_{h,k}^\rho = \frac{|G|}{d_\sigma} \delta_{\sigma, \rho}\delta_{j,h} u^\sigma_{i,k}
\mbox{    (convolution property).}
\end{equation}}
\end{enumerate}
\end{proposition}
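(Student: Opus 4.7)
The plan is to establish \eqref{ORT} by the classical rank-one averaging argument underlying Schur's orthogonality, and then to derive \eqref{CON} as a consequence. Fix orthonormal bases $\{w_1,\ldots,w_{d_\sigma}\}$ of $W$ and $\{u_1,\ldots,u_{d_\rho}\}$ of $U$. For each pair $(j,k)$ introduce the rank-one operator $E_{k,j}\in\Hom(W,U)$ defined by $E_{k,j}(w)=\langle w,w_j\rangle u_k$, and set
\[
T \;=\; \frac{1}{|G|}\sum_{g\in G}\rho(g^{-1})\,E_{k,j}\,\sigma(g).
\]
A direct verification gives $T\in\Hom_G(W,U)$. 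By Schur's lemma, $T=0$ whenever $\sigma\not\sim\rho$; when $\sigma=\rho$ (taking $u_l=w_l$), $T=\lambda I_W$ with $\lambda=\tr(E_{k,j})/d_\sigma=\delta_{k,j}/d_\sigma$, using the cyclic invariance of the trace. Independently, computing $\langle Tw_i,u_h\rangle$ directly, using $\rho(g^{-1})^*=\rho(g)$ and the definition of $E_{k,j}$, identifies this matrix entry with $\frac{1}{|G|}\langle u^\sigma_{j,i},u^\rho_{k,h}\rangle$. Matching the two evaluations and relabelling $(j,i,k,h)\leftrightarrow(i,j,h,k)$ yields \eqref{ORT}.

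For \eqref{CON} the key preliminary identity is
\[
u^\sigma_{i,j}(gg_0^{-1})\;=\;\sum_{l=1}^{d_\sigma}u^\sigma_{i,l}(g)\,\overline{u^\sigma_{j,l}(g_0)},
\]
obtained by expanding $\sigma(g_0^{-1})w_j$ in the basis $\{w_l\}$ and using the unitarity of $\sigma$ to rewrite $u^\sigma_{l,j}(g_0^{-1})=\overline{u^\sigma_{j,l}(g_0)}$. Substituting into the definition of $u^\sigma_{i,j}*u^\rho_{h,k}$ and pulling $u^\sigma_{i,l}(g)$ outside the sum over $g_0$ leaves exactly the Hermitian pairing $\langle u^\rho_{h,k},u^\sigma_{j,l}\rangle$. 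Applying part (1) collapses the $l$-summation to the single surviving term $l=k$, and produces the formula in \eqref{CON}, with the factor $\delta_{j,h}$ arising from the corresponding Kronecker factor in the orthogonality relation.

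I expect the only genuine obstacle to be notational: with four indices in play and the convention $u^\sigma_{j,i}(g)=\langle\sigma(g)w_i,w_j\rangle$ inverting the naive ``row/column'' order, it is easy to misplace a transpose or a complex conjugate. Fixing the convention once at the outset and keeping $(i,j)$ attached to $\sigma$ and $(h,k)$ to $\rho$ throughout should keep the bookkeeping under control; no deeper technical difficulty is anticipated.
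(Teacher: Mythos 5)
Your proof is correct, and the bookkeeping with the convention $u^\sigma_{j,i}(g)=\langle\sigma(g)w_i,w_j\rangle$ is handled accurately throughout, including the relabelling at the end of part (1) and the conjugate in the identity $u^\sigma_{i,j}(gg_0^{-1})=\sum_{l}u^\sigma_{i,l}(g)\overline{u^\sigma_{j,l}(g_0)}$. The paper does not prove the proposition itself but simply cites Lemma 3.6.3 and Lemma 3.9.14 of the authors' textbook; the rank-one averaging argument with Schur's lemma that you give for the orthogonality relations, followed by reduction of the convolution property to them, is the standard derivation and is essentially the same route as the cited lemmas.
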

\begin{proof}
See  Lemma 3.6.3 and Lemma 3.9.14  of \cite{book}.
\end{proof}

 Let $(\sigma,W)$ be a $G$-representation and   denote by $\chi^\sigma$ its character. The following elementary formula is a generalization of (2) in Exercise 9.5.8 of \cite{book}.

\begin{proposition}
If  $(\sigma, W)$ is irreducible, $w \in W$ is a vector of norm 1 and $\phi(g) = \langle\sigma(g) w, w\rangle$ is the diagonal  matrix
coefficient associated with $w$, then
\begin{equation}\label{eD21}
\chi^\sigma(g) = \frac{d_\sigma}{|G|}\sum_{h \in G}\phi(h^{-1}gh)
\end{equation}
for all $g\in G$.
\end{proposition}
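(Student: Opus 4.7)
The plan is to expand $\phi(h^{-1}gh)$ in terms of matrix coefficients associated with an orthonormal basis containing $w$, and then apply the orthogonality relations \eqref{ORT} to collapse the sum over $h$ down to the character.

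First, complete $w$ to an orthonormal basis $\{w_1, w_2, \ldots, w_{d_\sigma}\}$ of $W$ with $w_1 = w$, and let $u^\sigma_{j,i}$ denote the matrix coefficients as in \eqref{stellap2}; in particular $\phi = u^\sigma_{1,1}$. The next step is to rewrite
\[
\phi(h^{-1}gh) = \langle \sigma(g)\sigma(h)w,\sigma(h)w\rangle,
\]
using that $\sigma$ is unitary, and then expand $\sigma(h)w = \sum_{i}u^\sigma_{i,1}(h)w_i$ in the chosen basis. This gives
\[
\phi(h^{-1}gh) = \sum_{i,j=1}^{d_\sigma} u^\sigma_{i,1}(h)\,\overline{u^\sigma_{j,1}(h)}\,u^\sigma_{j,i}(g).
\]

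Summing over $h \in G$ and invoking the orthogonality relations \eqref{ORT} with $\sigma=\rho$, the factor $\sum_{h\in G} u^\sigma_{i,1}(h)\overline{u^\sigma_{j,1}(h)} = \langle u^\sigma_{i,1},u^\sigma_{j,1}\rangle$ equals $\frac{|G|}{d_\sigma}\delta_{i,j}$. Substituting and recognizing $\sum_{i} u^\sigma_{i,i}(g) = \chi^\sigma(g)$, we obtain
\[
\sum_{h\in G}\phi(h^{-1}gh) = \frac{|G|}{d_\sigma}\sum_{i=1}^{d_\sigma}u^\sigma_{i,i}(g) = \frac{|G|}{d_\sigma}\chi^\sigma(g),
\]
which rearranges to \eqref{eD21}. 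No single step is a real obstacle here; the only place demanding care is keeping the indices in the matrix coefficients straight when passing from the inner product on the left to the conjugated/unconjugated pair on the right, since a mismatch would produce $\sum_i |u^\sigma_{i,1}(g)|^2$ instead of the character.
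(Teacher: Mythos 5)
Your proof is correct and matches the paper's proof essentially step for step: expand $\sigma(h)w$ in an orthonormal basis with $w_1=w$, use unitarity to rewrite $\phi(h^{-1}gh)$, apply the orthogonality relations \eqref{ORT} to collapse the $h$-sum to $\frac{|G|}{d_\sigma}\delta_{i,j}$, and recognize the trace. The index bookkeeping you flagged is indeed handled correctly.
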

\begin{proof}
Let $\{w_1 = w, w_2, \ldots, w_{d_\sigma}\}$ be an orthonormal basis of $W$ and
$u^\sigma_{j,i}$ as in \eqref{stellap2}; then 
\[
\sigma(g)w_i = \sum_{j = 1}^{d_\sigma} u^\sigma_{j,i}(g)w_j.
\]
 Thus 
\[
\begin{split}
\sum_{h \in G}\phi(h^{-1}gh) &= \sum_{h \in G}\langle\sigma(g)\sigma(h) w_1, \sigma(h) w_1\rangle\\
& = \sum_{j, \ell = 1}^{d_\sigma}\sum_{h \in G}u^\sigma_{j,1}(h)\overline{u^\sigma_{\ell,1}(h)}\langle\sigma(g) w_j,  w_\ell\rangle\\
\mbox{(by \eqref{ORT})}\ \ \ \ & = \frac{|G|}{d_\sigma}\chi^\sigma(g).
\end{split}
\]
\end{proof}

Let $K$ be a subgroup of $G$,   $(\theta, V)$ a representation of $K$ and denote by $\lambda = \Ind_K^G\theta$ the {\it induced representation} (see for instance,\cite{Bump,Mackey,book2,NS} and \cite{Sternberg}). We recall that 
the representation space is given by
\begin{equation}\label{H31}
\Ind_K^GV = \{f:G  \to V: f(gk) = \theta(k^{-1})f(g), \  \forall g \in G, k \in K\}
\end{equation} and that the $G$-action is defined by setting
\begin{equation}\label{H32}
[\lambda(g_0) f](g) = f(g_0^{-1}g)
\end{equation}
for all $f \in \Ind_K^GV$, $g,g_0 \in G$.
Let $G  = \coprod_{t \in \TT}tK$ be a decomposition of $G$ into right $K$-cosets ($\coprod$ denotes a disjoint union).
For $v \in V$ we define  $f_v \in \Ind_K^GV$  by setting 
\begin{equation}\label{definizione stellata1}
f_v(g) = \left\{
\begin{array}{ll}
\theta(g^{-1})v & \mbox{if $g \in K$}\\
0 & \ \mbox{if $g \notin K$.}
\end{array}
\right.
\end{equation}
Then for every  $f \in \Ind_K^GV$ we have:

\begin{equation}\label{definizione stellata2}
f = \sum_{t\in \TT}\lambda(t)f_{v_t}
\end{equation}
with $v_t = f(t)$. 
The representation $\Ind_K^G\theta$ is unitary with respect to the following scalar product:
\begin{equation}\label{H33}
\langle f_1, f_2\rangle_{\Ind_K^GV}  = \frac{1}{|K|} \sum_{g \in G}\langle f_1(g), f_2(g) \rangle_V.
\end{equation}
Moreover, if $\{v_j:j=1,2,\dotsc,d_\theta\}$ is an orthonormal basis in $V$ then the set

\begin{equation}\label{orthbasisind}
\{\lambda(t)f_{v_j}:t\in\TT, j=1,2,\dotsc,d_\theta\}
\end{equation}
is an orthonormal basis in $\text{Ind}_K^G V$ (see \cite{Mackey}).

\section{Orthogonality relations for Frobenius reciprocity}\label{SecorthFrob}

Let $G$ be again a finite group, $K\leq G$ a subgroup, $(\sigma,W)$ a representation of $G$ and $(\theta, V)$ a  representations of $K$. Frobenius reciprocity is usually stated an explicit isomorphism between $\Hom_G(W,\Ind_K^GV)$ and $\Hom_K(\Res_K^GW, V)$.
In this section we present a detailed analysis of all {\it other} aspects of Frobenius reciprocity, all of them in an {\em orthogonal} version. In particular, we show  how to obtain from an explicit orthogonal decomposition of the $V$-isotypic component of $\Res^G_KW$, an explicit orthogonal  decomposition of the 
$W$-isotypic component of $\Ind_K^G V$. 
  
\begin{definition}
{\rm
\begin{enumerate}
\item{
For each $T \in \Hom_G(W, \Ind_K^GV)$ we set 
\[
\TW w = \sqrt{|G/K|}[Tw](1_G), \qquad\mbox{ for all $w \in W$.}
\]
}
\item{ For each $L \in \Hom_K(\Res_K^GW, V)$ we set 
\[
[\LV w](g)= \frac{1}{\sqrt{|G/K|}}L\sigma(g^{-1})w,\qquad \mbox{ for all $w \in W$, $g\in G$.}
\]
}
\item{For each $T \in \Hom_G(\Ind_K^G V, W)$ we set
\[
\overset{\square}{T} v =\sqrt{ |G/K|} Tf_v,\qquad \mbox{ for all $v \in V$.}
\]
}
\item{For  each $L \in \Hom_K(V, \Res_K^GW)$ we set 
\[
\overset{\diamond}{L}f = \frac{1}{\sqrt{|G/K|}}\sum_{t \in \TT}\sigma(t)Lf(t),\qquad \mbox{ for all $f \in \Ind_K^GV$.}
\]
}
\end{enumerate}}
\end{definition}

\noindent
Note that 
\begin{equation}\label{definizione stellata3}
\overset{\diamond}{L} f   = \frac{1}{\sqrt{|G|\cdot |K|}}\sum_{g \in G}\sigma(g)Lf(g).
\end{equation}
Indeed, 
\[
\sum_{g \in G}\sigma(g)Lf(g) = \sum_{t \in \TT}\sum_{k \in K}\sigma(tk)Lf(tk) = 
|K|\sum_{t \in \TT}\sigma(t)Lf(t)
\]
because $L \in \Hom_K(V, \Res^G_KW)$ and $\theta(k)f(tk) = f(t)$. In particular, $\overset{\diamond}{L}$ does not depend on the particular choice of $\TT$.\\

\begin{theorem}[Frobenius reciprocity revisited]\label{FRR}
\begin{enumerate}
\item{For each $T \in \Hom_G(W, \Ind_K^GV)$ we have $\TW\in \Hom_K(\Res^G_KW, V)$ and the map
\[
\begin{array}{ccc}
\Hom_G(W, \Ind_K^GV) & \longrightarrow & \Hom_K(\Res_K^G W, V)\\
T & \longmapsto & \TW
\end{array}
\]
is a linear isometric isomorphism.  Moreover, its inverse is given by
\[
\begin{array}{ccc}
 \Hom_K(\Res_K^G W, V)  & \longrightarrow & \Hom_G(W, \Ind_K^GV)\\
 L& \longmapsto & \LV .
\end{array}
 \]}
 \item{For each $T\in \Hom_G(W, \Ind_K^GV)$ we have:
 \begin{equation}\label{commTwdgecircle}
(T^*)^\square = (\TW)^*.
\end{equation}
 }
\end{enumerate}
\end{theorem}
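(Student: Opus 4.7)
My plan is to handle the four claims in order: well-definedness of the hat and check operators, the mutual-inverse property, the isometry, and finally the compatibility formula \eqref{commTwdgecircle}.

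\textbf{Step 1 (Hat and check map to the right spaces).} For $T\in\Hom_G(W,\Ind_K^G V)$ and $k\in K$, I would use that any $f\in\Ind_K^G V$ satisfies $f(k^{-1})=\theta(k)f(1_G)$ (from \eqref{H31}) together with the $G$-equivariance of $T$ (via \eqref{H32}):
\[
\TW\sigma(k)w=\sqrt{|G/K|}[T\sigma(k)w](1_G)=\sqrt{|G/K|}[Tw](k^{-1})=\theta(k)\TW w.
\]
For the check map I would verify (i) $[\LV w](gk)=\theta(k^{-1})[\LV w](g)$ using $L\in\Hom_K$ and (ii) $\LV\sigma(g_0)=\lambda(g_0)\LV$ by plugging $g_0^{-1}g$ into the defining formula. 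Linearity is immediate.

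\textbf{Step 2 (Mutual inverses).} I would compute directly that
\[
[(\TW)^\vee w](g)=\frac{1}{\sqrt{|G/K|}}\TW\sigma(g^{-1})w=[T\sigma(g^{-1})w](1_G)=[Tw](g),
\]
so $(\TW)^\vee=T$, and
\[
(\LV)^\wedge w=\sqrt{|G/K|}[\LV w](1_G)=Lw,
\]
so $(\LV)^\wedge=L$. This also establishes the bijection, hence the isomorphism.

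\textbf{Step 3 (Isometry).} This is the technical core. Fix $T_1,T_2\in\Hom_G(W,\Ind_K^G V)$, let $\{w_i\}$ be an orthonormal basis of $W$, and use the formula $[T_jw_i](g)=\frac{1}{\sqrt{|G/K|}}\TW_j\sigma(g^{-1})w_i$ (valid by the same identity as in Step 2). Then from the definition of the scalar product on $\Hom$ and on $\Ind_K^G V$,
\[
\tr(T_2^*T_1)=\sum_i\langle T_1w_i,T_2w_i\rangle_{\Ind}=\frac{1}{|K|}\sum_{g\in G}\sum_i\frac{|K|}{|G|}\bigl\langle \TW_2^*\TW_1\sigma(g^{-1})w_i,\sigma(g^{-1})w_i\bigr\rangle_W.
\]
Since $\{\sigma(g^{-1})w_i\}_i$ is an orthonormal basis of $W$ for each $g$, the inner sum is $\tr(\TW_2^*\TW_1)$, independent of $g$. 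Summing over $g$ kills the $1/|G|$ and yields $\tr(T_2^*T_1)=\tr(\TW_2^*\TW_1)$. Dividing by $\dim W$ gives the isometry. The main obstacle is getting the normalization bookkeeping right; the factor $|G/K|$ in the definitions is designed precisely so that the $|K|$'s cancel.

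\textbf{Step 4 (Proof of \eqref{commTwdgecircle}).} Recall $T^*\in\Hom_G(\Ind_K^G V,W)$ by \eqref{adjoint}, so both sides live in $\Hom(V,W)$. I would test against arbitrary $v\in V$, $w\in W$:
\[
\bigl\langle (T^*)^\square v,w\bigr\rangle_W=\sqrt{|G/K|}\,\langle T^*f_v,w\rangle_W=\sqrt{|G/K|}\,\langle f_v,Tw\rangle_{\Ind}.
\]
Expand via \eqref{H33}; by \eqref{definizione stellata1} only $g\in K$ contributes, and on $K$ we have $f_v(k)=\theta(k^{-1})v$ together with $[Tw](k)=\theta(k^{-1})[Tw](1_G)=\frac{1}{\sqrt{|G/K|}}\theta(k^{-1})\TW w$. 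Unitarity of $\theta$ collapses each summand to $\frac{1}{\sqrt{|G/K|}}\langle v,\TW w\rangle_V$, and summing over $|K|$ values of $k$ and dividing by $|K|$ gives
\[
\bigl\langle (T^*)^\square v,w\bigr\rangle_W=\langle v,\TW w\rangle_V=\bigl\langle (\TW)^*v,w\bigr\rangle_W,
\]
which is the claim. No real obstacle here beyond careful bookkeeping of $|G/K|$ factors.
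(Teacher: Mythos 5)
Your proposal is correct and follows essentially the same route as the paper's proof: well-definedness via the defining relation of $\Ind_K^G V$, the identity $[Tw](g)=\frac{1}{\sqrt{|G/K|}}\TW\sigma(g^{-1})w$ for both inverse and isometry, and testing $(T^*)^\square$ against $v,w$ in Step 4. The only cosmetic differences are that you evaluate the isometry sum via the orthonormal basis $\{\sigma(g^{-1})w_i\}_i$ where the paper invokes cyclicity of trace, and you exhibit mutual inverses on both sides where the paper argues injectivity plus surjectivity via $(\LV)^\wedge=L$.
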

\begin{proof}
(1) Let $T \in \Hom_G(W, \Ind_K^GV)$ and $\lambda$ as in \eqref{H32}. For all $k \in K$ and $w\in W$ we have 
\[
\begin{split}
\TW\sigma(k)w  & = \sqrt{|G/K|} [T\sigma(k)w](1_G)\\
\mbox{(since $T \in \Hom_G(W, \Ind_K^GV)$)} \  \  \  \   \  \ & = \sqrt{|G/K|}[\lambda(k)Tw](1_G)\\
\mbox{(by \eqref{H32})}\  \  \  \  \  \  & = \sqrt{|G/K|} [Tw](k^{-1})\\
\mbox{(by \eqref{H31})}\  \  \  \  \  \  & =  \sqrt{|G/K|}\theta(k)[Tw](1_G)\\
& =  \theta(k)\TW w.
\end{split}
\]
This proves that $\TW \in \Hom_K(\Res_K^GW,V)$.
The identity
\begin{equation}\label{H6stella}
\begin{split}
[Tw](g)& = [\lambda(g^{-1})Tw](1_G)\\
& = [T\sigma(g^{-1})w](1_G)\\
& = \frac{1}{\sqrt{|G/K|}}\TW \sigma(g^{-1})w
\end{split}
\end{equation}
shows that the map $T \mapsto \TW$ is injective, because $T$ is determined by $\TW$.
Now we use \eqref{H6stella} to show that the map is also an isometry. 
If $T_1, T_2 \in \Hom_G(W, \Ind_K^GV)$ and  $\{w_1, w_2, \ldots, w_{d_\sigma}\}$ is an orthonormal basis of $W$ then
\[
\begin{split}
\tr(T_2^*T_1) & = \sum_{i = 1}^{d_\sigma}\langle T_1w_i, T_2w_i\rangle_{\Ind_K^GV}\\
\mbox{(by \eqref{H33}) } \  \  \  \  \  \ & =  \sum_{i = 1}^{d_\sigma}\frac{1}{|K|}\sum_{g \in G}\langle[T_1w_i](g), [T_2w_i](g)\rangle_V\\
\mbox{(by \eqref{H6stella})} \  \  \  \  \  \  & = \sum_{i = 1}^{d_\sigma}\frac{1}{|G|}\sum_{g \in G}
\langle \overset{\wedge}{T_1}\sigma(g^{-1})w_i, \overset{\wedge}{T_2}\sigma(g^{-1})w_i\rangle_V\\
& = \frac{1}{|G|}\sum_{g\in G}\tr[\sigma(g)(\overset{\wedge}{T_2})^*\overset{\wedge}{T_1}\sigma(g^{-1})]\\
&= \tr[(\overset{\wedge}{T_2})^*\overset{\wedge}{T_1}],
\end{split}
\]
that is $\langle T_1,T_2\rangle =\frac{1}{d_\sigma}{\rm tr}(T_2^*T_1)=\frac{1}{d_\sigma}{\rm tr \left[(\widehat{T}_2)^*\widehat{T}_1\right]}=\langle \widehat{T}_2,\widehat{T}_1\rangle$.
It is easy to see that if $L\in \Hom_K(\Res_K^GW,V)$ then 
$[\LV w](gk) = \theta(k^{-1})[\LV w](g)$ and  $\lambda(g)\LV w = \LV\sigma(g) w$ for all $g\in G,\  k \in K,\ w\in W$, that is, $\LV w \in \Ind_K^GV$ and $\LV \in \Hom_G(W, \Ind_K^GV)$.
Finally, by definition of $\wedge$ and $\vee$ we have 
\[
\Bigl(\LV\Bigr)^{\wedge} w =\sqrt{ |G/K}|[\LV w](1_G) = L w
\]
for all $w \in W$, that is  the map $T\mapsto \TW$ is surjective and $L\mapsto \LV$ is its inverse.

(2) For any $T\in \Hom_G(W,\Ind_K^GV)$, $w\in W$ and $v\in V$ we have (by definition of $\square$):
\[
\begin{split}
\frac{1}{\sqrt{|G/K|}}\langle (T^*)^\square v, w\rangle_W & = \langle T^*f_v, w\rangle_W\\
& = \langle f_v, Tw\rangle_{\Ind_K^GV}\\
& = \frac{1}{|K|}\sum_{g\in G}\langle f_v(g), [Tw](g)\rangle_V\\
\mbox{(by \eqref{definizione stellata1})} \  \  \  \  \  \   & = \frac{1}{|K|}\sum_{k \in K}\langle\theta(k^{-1})v, [Tw](k)\rangle_V\\
& = \frac{1}{|K|}\sum_{k \in K}\langle v, \theta(k)[Tw](k)\rangle_V\\
\mbox{ (since $Tw\in \Ind_K^GV$)} \  \  \  \  \  \  & =\frac{1}{\sqrt{|G/K|}} \langle v, \TW w\rangle_V\\
& =\frac{1}{\sqrt{|G/K|}} \langle(\TW)^*v, w\rangle_W.
\end{split}
\]
\end{proof}

The following corollary should be compared with Corollary 34.1 in \cite{Bump}  and Section 2.3 in \cite{st1}.\\

\begin{corollary}[The other side of Frobenius reciprocity]\label{corotherside}
Let $T\in \Hom_G(\Ind_K^G V, W)$. Then  $\overset{\square}{T}\in \Hom_K(V, \Res_K^G W)$ and the map
\[
\begin{array}{ccc}
\Hom_G(\Ind_K^GV, W) & \longrightarrow & \Hom_K(V, \Res_K^GW)\\
T & \longmapsto & \overset{\square}{T}
\end{array}
\]
is a linear isomorphism with 

\begin{equation}\label{normcircle}
\langle \overset{\square}{T}_1,\overset{\square}{T}_2\rangle=\lvert G/K\rvert \langle T_1,T_2\rangle,
\end{equation}
for all $T_1,T_2\in\Hom_G(\Ind_K^G V, W)$. The inverse is given by 
\[
\begin{array}{ccc}
\Hom_K(V, \Res_K^GW)& \longrightarrow &  \Hom_G(\Ind_K^GV, W)\\
  L &  \longmapsto & \overset{\diamond}{L}
\end{array}\]
and 
\begin{equation}\label{pastrocchio}
(L^*)^\vee = \left(\overset{\diamond}{L}\right)^\square
\end{equation}
for all $L\in \Hom_K(V, \Res_K^GW)$.

In particular, the diagram
 \[
\begin{array}{ccc}
\Hom_G(W,\Ind_K^G V)&\stackrel{\wedge}{\longrightarrow}
& \Hom_K(\Res_K^GW,V)\\
*\downarrow &&*\downarrow \\
\Hom_G(\Ind_K^G V,W)&\stackrel{\square}{\longrightarrow}
& \Hom_K(V,\Res_K^GW)\\
\end{array}
\]
is commutative.
\end{corollary}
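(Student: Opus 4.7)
The strategy is to derive everything from Theorem \ref{FRR} by transporting the $\wedge$/$\vee$ bijection across the adjoint operation $*$ on both sides. The key observation is that \eqref{commTwdgecircle}, applied to $T^* \in \Hom_G(W, \Ind_K^G V)$ in place of $T$, can be rewritten as
\[
\overset{\square}{T} = ((T^*)^\wedge)^*, \qquad \text{for every } T \in \Hom_G(\Ind_K^G V, W).
\]
Since $(T^*)^\wedge \in \Hom_K(\Res_K^G W, V)$ by Theorem \ref{FRR}(1), its adjoint lies in $\Hom_K(V, \Res_K^G W)$, so $\overset{\square}{T}$ is a $K$-intertwiner as claimed; and as the composition of three bijections ($*$, $\wedge$, $*$), the map $T \mapsto \overset{\square}{T}$ is itself a linear bijection. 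This also immediately yields the commutative diagram, which is simply a pictorial restatement of the same identity.

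To identify the inverse explicitly as $\diamond$, I would first check that $\overset{\diamond}{L} \in \Hom_G(\Ind_K^G V, W)$ using the reformulation \eqref{definizione stellata3}: the $G$-equivariance becomes a one-line change of variable $g \mapsto g_0 g$ inside $\sum_{g \in G}\sigma(g) L f(g)$. Next, I would verify $(\overset{\diamond}{L})^\square = L$ directly: picking $\TT$ to contain $1_G$, the support condition \eqref{definizione stellata1} on $f_v$ collapses the sum $\sum_{t \in \TT}\sigma(t) L f_v(t)$ to the single term $L v$, and the $\sqrt{|G/K|}$ factors in the two definitions exactly cancel. Combined with the bijectivity of $\square$ established above, this shows $\diamond$ is its two-sided inverse.

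For the norm identity \eqref{normcircle}, I would apply \eqref{PS} twice to the relation $\overset{\square}{T} = ((T^*)^\wedge)^*$, using the isometry of $\wedge$ from Theorem \ref{FRR}(1):
\[
\langle \overset{\square}{T}_1, \overset{\square}{T}_2 \rangle = \frac{\dim W}{\dim V}\langle (T_2^*)^\wedge, (T_1^*)^\wedge \rangle = \frac{\dim W}{\dim V}\langle T_2^*, T_1^* \rangle = \frac{\dim \Ind_K^G V}{\dim V}\langle T_1, T_2 \rangle = |G/K|\langle T_1, T_2 \rangle,
\]
where the last step uses $\dim \Ind_K^G V = |G/K|\dim V$. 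Formula \eqref{pastrocchio} is then obtained by substituting $T = \overset{\diamond}{L}$ into $\overset{\square}{T} = ((T^*)^\wedge)^*$: since $(\overset{\diamond}{L})^\square = L$, this gives $L^* = ((\overset{\diamond}{L})^*)^\wedge$, and applying $\vee$ to both sides produces the claimed identity.

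The main obstacle is the careful bookkeeping of the scaling constants (the $\sqrt{|G/K|}$ factors in the definitions of $\wedge$, $\vee$, $\square$, $\diamond$, and the ratios $\dim U/\dim W$ in \eqref{PS}); once those align, everything reduces to transporting Theorem \ref{FRR} cleanly through adjoints.
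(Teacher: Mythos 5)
Your proposal is correct and follows essentially the same route as the paper: both hinge on \eqref{commTwdgecircle} rewritten as $\overset{\square}{T} = ((T^*)^\wedge)^*$, derive \eqref{normcircle} by applying \eqref{PS} twice together with the isometry of $\wedge$ from Theorem \ref{FRR}, and obtain \eqref{pastrocchio} by transporting the key identity through adjoints. The only cosmetic difference is the direction of the inverse check---you verify $(\overset{\diamond}{L})^\square = L$ via the support collapse of $f_v$ on a transversal containing $1_G$, while the paper verifies the opposite composition $(\overset{\square}{T})^\diamond = T$ using \eqref{definizione stellata2}; either direction suffices once bijectivity of $\square$ is in hand.
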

\begin{proof}
Besides the  statement that the map $L\mapsto \overset{\diamond}{L}$ is the inverse of the map $T \mapsto \overset{\square}{T}$, everything follows from (2) in Theorem \ref{FRR}, \eqref{PS} and \eqref{adjoint}.
For  all $T \in \Hom_G(\Ind_K^GV, W)$, $\phi\in \Ind_K^GV$, we have:

\begin{align*}
(\overset{\square}T)^\diamond \phi & = \frac{1}{\sqrt{|G/K|}}\sum_{t\in \TT}\sigma(t)\overset{\square}T\phi(t)&\mbox{ (by definition of $\diamond$)}  \\
 & = \sum_{t\in \TT}\sigma(t)Tf_{v_t}&\mbox{(by definition of $\square$ with $v_t= \phi(t)$)} \\
 & = \sum_{t\in \TT}T\lambda(t)f_{v_t}&\mbox{(since $T \in \Hom_G(\Ind_K^GV, W)$)}  \\
 & = T\phi &\mbox{(by \eqref{definizione stellata2})}
\end{align*}
that is, the map $L\mapsto \overset{\diamond}L$ is the inverse of the map 
$T\mapsto \overset{\square}T$. We want also show how to derive \eqref{normcircle}. For $T_1,T_2\in\Hom_G(\Ind_K^G V, W)$ we have:

\begin{align*}
\langle \overset{\square}{T}_1,\overset{\square}{T}_2\rangle=&\langle[(T_1^*)^\wedge]^*,[(T_2^*)^\wedge]^*\rangle &(\text{by }\eqref{commTwdgecircle})\\ 
=&\frac{d_\sigma}{d_\theta}\langle (T_2^*)^\wedge,(T_1^*)^\wedge\rangle &(\text{by }\eqref{PS})\\
=&\frac{d_\sigma}{d_\theta}\langle T_2^*,T_1^*\rangle &(\text{by Theorem }\ref{FRR})\\
=&\lvert G/K\rvert \langle T_1,T_2\rangle. &(\text{again by }\eqref{PS})
\end{align*}

Finally, by \eqref{commTwdgecircle} we have 
\begin{equation}\label{mostro}
\left\{\left[(L^*)^\vee\right]^*\right\}^\square =  \left\{\left[(L^*)^\vee\right]^\wedge\right\}^* = L = 
 \left\{\left[\left(\overset{\diamond}{L}\right)^*\right]^*\right\}^\square
\end{equation}
and this yields \eqref{pastrocchio}.
\end{proof}

\begin{corollary}[Orthogonality relations for Frobenius reciprocity I]\label{corollario1}

Let $m $ be the dimension of $\Hom_G(W, \Ind_K^GV)$ and suppose that $L_1, L_2, \ldots, L_m\in \Hom_K(V, \Res_K^GW)$. Then the following facts are equivalent:
\begin{enumerate}

\item{The set $\{L_1, L_2, \ldots, L_m\}$ is an orthonormal basis of $\Hom_K(V, \Res_K^GW)$;}

\item{The set $\left\{\sqrt{|G/K|}\overset{\diamond}{L_1},\sqrt{|G/K|} \overset{\diamond}{L_2}, \ldots, \sqrt{|G/K|}\overset{\diamond}{L_m}\right\}$
is an orthonormal basis of 

$\Hom_G(\Ind_K^GV, W)$;}

\item{The set $\left\{\sqrt{\frac{d_\sigma}{d_\theta}}L_1^*, \sqrt{\frac{d_\sigma}{d_\theta}} L_2^*, \ldots, \sqrt{\frac{d_\sigma}{d_\theta}} L_m^*\right\}$
is an orthonormal basis of 

$\Hom_K(\Res_K^G W, V)$;}

\item{The set $\left\{\sqrt{\frac{d_\sigma}{d_\theta}}(L_1^*)^\vee,\sqrt{\frac{d_\sigma}{d_\theta}} (L_2^*)^\vee, \ldots,\sqrt{\frac{d_\sigma}{d_\theta}} (L_m^*)^\vee\right\}$ 
is an orthonormal basis  of  

$\Hom_G(W, \Ind_K^GV)$.}

\end{enumerate}
\end{corollary}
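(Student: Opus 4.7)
The plan is to view the four statements as describing orthonormal bases in the four spaces sitting at the corners of the commutative diagram of Corollary \ref{corotherside}. Each of the four connecting maps is either an isometry or an explicitly known scalar multiple of an isometry, so the corollary reduces to checking that the rescaling factors appearing in its four clauses are exactly those needed to turn an ONB of one space into an ONB of each of the others.

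First I would catalogue the three isometry constants. (i) By Theorem \ref{FRR}(1) the map $\wedge$, and hence its inverse $\vee$, is an isometric isomorphism. (ii) By \eqref{normcircle}, the map $\square$ scales the inner product by $|G/K|$, so $L\mapsto\overset{\diamond}{L}$ satisfies $\langle\overset{\diamond}{L}_1,\overset{\diamond}{L}_2\rangle=\frac{1}{|G/K|}\langle L_1,L_2\rangle$. (iii) By \eqref{PS} applied to $\Hom_K(V,\Res^G_KW)$ with source dimension $d_\theta$ and target dimension $d_\sigma$, the adjoint map $L\mapsto L^*$ into $\Hom_K(\Res^G_KW,V)$ satisfies $\langle L_i^*,L_j^*\rangle=\frac{d_\theta}{d_\sigma}\langle L_j,L_i\rangle$.

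With these three facts in hand, the equivalences are essentially immediate. The implication (1)$\Leftrightarrow$(2) follows from (ii), since the factor $\sqrt{|G/K|}$ exactly renormalizes $\overset{\diamond}{L}_i$; (1)$\Leftrightarrow$(3) follows from (iii), since $\sqrt{d_\sigma/d_\theta}$ renormalizes $L_i^*$; and (3)$\Leftrightarrow$(4) follows from (i), because $\vee$ is an isometry and therefore preserves orthonormality without disturbing the scalar $\sqrt{d_\sigma/d_\theta}$. As a consistency check on the accounting, one can verify (2)$\Leftrightarrow$(4) directly from the identity $(L^*)^\vee=(\overset{\diamond}{L})^\square$ of \eqref{pastrocchio} combined with (ii) and (iii), which should produce the same ratio $\sqrt{d_\sigma/d_\theta}/\sqrt{|G/K|}$ on both sides.

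The main obstacle is purely bookkeeping: one must keep track of which Hom-space plays the role of source and which of target when invoking \eqref{PS}, because the dimensions $d_\sigma$ and $d_\theta$ appear in the ratio in opposite orders depending on the direction of the adjoint; and one must confirm that the $\sqrt{d_\sigma/d_\theta}$ introduced by the adjoint in (iii) is exactly preserved by the isometry $\vee$ in the passage from (3) to (4), so that no extra factor of $\sqrt{|G/K|}$ sneaks in. Beyond this careful tracking, no further ideas are required.
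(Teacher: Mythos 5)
Your proof is correct, and it supplies the argument the paper leaves implicit (Corollary~\ref{corollario1} is stated without proof; the intended reasoning is exactly the three rescaling facts you catalogue, drawn from Theorem~\ref{FRR}(1), \eqref{normcircle} and \eqref{PS}). One minor caution: the displayed equation \eqref{pastrocchio} as printed, $(L^*)^\vee=(\overset{\diamond}{L})^\square$, is a typographical slip — the right-hand side lives in the wrong space — and the identity actually established in \eqref{mostro} and used later in the paper is $(L^*)^\vee=(\overset{\diamond}{L})^*$; your ``consistency check'' implicitly uses this corrected form together with \eqref{PS} applied to $\Hom_G(\Ind_K^GV,W)$, where the relevant dimension ratio is $d_\sigma/(|G/K|d_\theta)$, and the bookkeeping does indeed close.
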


\quad\\

\begin{corollary}[Orthogonality relations for Frobenius reciprocity II]\label{corollario2}
Suppose that $(\sigma, W)$ and $ (\theta,V)$ are irreducible.  Then 
$\sqrt{\frac{d_\sigma}{d_\theta}}(L_1^*)^\vee,\sqrt{\frac{d_\sigma}{d_\theta}} (L_2^*)^\vee, \ldots,\sqrt{\frac{d_\sigma}{d_\theta}} (L_m^*)^\vee$ give rise to an isometric orthogonal decomposition of the $W$-component of $\Ind_K^GV$ if and only if  $L_1, L_2, \ldots, L_m$ give rise to an isometric orthogonal decomposition of the $V$-component of $\Res_K^G W$.
\end{corollary}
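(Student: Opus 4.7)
The plan is to reduce the statement to a direct concatenation of Lemma \ref{lemma1} (applied on both sides of the Frobenius correspondence) with the equivalence (1) $\Leftrightarrow$ (4) of Corollary \ref{corollario1}. No new computation is required; everything we need is already in place because both $(\sigma,W)$ and $(\theta,V)$ are now assumed irreducible.

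First, since $(\theta,V)$ is irreducible and $L_1,L_2,\ldots,L_m \in \Hom_K(V,\Res_K^G W)$, I would invoke Lemma \ref{lemma1} (with the roles of $W$ and $U$ played by $V$ and $\Res_K^G W$, respectively) to conclude that $L_1,\ldots,L_m$ give rise to an isometric orthogonal decomposition of the $V$-isotypic component of $\Res_K^G W$ if and only if they form an orthonormal basis of $\Hom_K(V,\Res_K^G W)$. Note that for this application we also need $m=\dim \Hom_K(V,\Res_K^G W)$, which follows from the standard Frobenius reciprocity $\dim \Hom_G(W,\Ind_K^G V)=\dim \Hom_K(V,\Res_K^G W)$ (a consequence of Theorem \ref{FRR} combined with \eqref{PS}).

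Second, since $(\sigma,W)$ is irreducible, I would apply Lemma \ref{lemma1} again, this time to the operators $\sqrt{d_\sigma/d_\theta}\,(L_i^*)^\vee \in \Hom_G(W,\Ind_K^G V)$: they give an isometric orthogonal decomposition of the $W$-isotypic component of $\Ind_K^G V$ if and only if they form an orthonormal basis of $\Hom_G(W,\Ind_K^G V)$.

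Finally, the bridge between the two sides is exactly the equivalence (1) $\Leftrightarrow$ (4) of Corollary \ref{corollario1}: the family $\{L_1,\ldots,L_m\}$ is an orthonormal basis of $\Hom_K(V,\Res_K^G W)$ if and only if $\bigl\{\sqrt{d_\sigma/d_\theta}\,(L_i^*)^\vee\bigr\}_{i=1}^m$ is an orthonormal basis of $\Hom_G(W,\Ind_K^G V)$. Chaining these three equivalences yields the claim. There is no genuine obstacle; the only point requiring minor care is checking that the normalization constants $\sqrt{d_\sigma/d_\theta}$ assembled from Corollary \ref{corollario1} match those needed to produce an orthonormal basis (rather than merely an orthogonal one) so that Lemma \ref{lemma1} is applied with the correct scaling on the $\Ind$-side.
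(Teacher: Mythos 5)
Your proof is correct and follows essentially the same route as the paper, whose proof consists of the single line ``It follows from Theorem~\ref{FRR} and Lemma~\ref{lemma1}.'' You simply unpack this: Lemma~\ref{lemma1} on the $K$-side, Lemma~\ref{lemma1} on the $G$-side, and the bridge (1)$\Leftrightarrow$(4) of Corollary~\ref{corollario1}, which is itself the packaged consequence of Theorem~\ref{FRR} together with \eqref{PS}.
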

\begin{proof}
It follows from Theorem \ref{FRR} and Lemma \ref{lemma1}.
\end{proof}

In the applications of the last two Corollaries, we will often use the identity \eqref{pastrocchio}, that is  
$(L_j^*)^\vee =(\overset{\diamond}{L_j})^*$.
The following commutative diagram is helpful to memorize the previous results.
\[
\begin{array}{ccc}
\text{Hom}_G(W,\text{Ind}_K^G V)&\begin{array}{c}\stackrel{\wedge}{\longrightarrow}\\
\stackrel{\vee}{\longleftarrow}
\end{array}& \text{Hom}_K(\text{Res}_K^GW,V)\\
*\updownarrow &&\updownarrow*\\
\text{Hom}_G(\text{Ind}_K^G V,W)&\begin{array}{c}\stackrel{\square}{\longrightarrow}\\
\stackrel{\lozenge}{\longleftarrow}
\end{array}& \text{Hom}_K(V,\text{Res}_K^GW)\\
\end{array}
\]

\begin{remark}{\rm
In \cite{Nebe} it is developed a different version of orthogonality relations for Frobenius reciprocity. Actually, the author works in a more general setting: she considers representations over fields of characteristic zero and her spaces are endowed with arbitrary non-degenerate symmetric bilinear forms. However, we limit ourselves to illustrate and derive her main result in our setting. Theorem 2.1 of \cite{Nebe} may be expressed in the following way: under the assumption that $W$ is $G$-irreducible, if $L\in \Hom_K(V,\Res_K^GW)$ is an isometry then also $\sqrt{\frac{d_\sigma}{d_\theta}}(L^*)^\vee \equiv\sqrt{\frac{d_\sigma}{d_\theta}}(\overset{\diamond}{L})^*\in \Hom_G(W,\Ind_K^G V)$ is an isometry. This is our derivation: if $L$ is an isometry, then $\lVert L\rVert=1$ and therefore also $\lVert \sqrt{\frac{d_\sigma}{d_\theta}}(L^*)^\vee\rVert=1$. Arguing as in Lemma \ref{lemma1}, it is easy to show that this fact implies that $\sqrt{\frac{d_\sigma}{d_\theta}}(L^*)^\vee$ is an isometry. Finally, we note that Theorem 2.4 in \cite{Nebe} is a version of our Corollary \ref{corollario2}.
}
\end{remark}

\begin{remark}{\rm
Corollary \ref{corotherside} is useful when irreducible representations are obtained as induced representations. This is the case of the little group method of Mackey and Wigner: we refer to \cite{Clifford} for a general formulation of this method and to \cite{wreath} for its applications to wreath products of finite groups. Indeed, in \cite{st1} we used a version of Corollary \ref{corotherside} in order to decompose a wide class of permutation representations of wreath products, including the exponentiation action.
}
\end{remark}

\section{Harmonic analysis in $\Hom_G(\Ind_K^GV, \Ind_K^GV)$}\label{Seccommutant}

In thi section we construct  an orthonormal basis of the commutant of  $\Ind_K^G V$  from the orthonormal bases  analyzed in the 
previous section.  This  way we can introduce a Fourier transform that gives an explicit isomorphism between 
 $\Hom_G(\Ind_K^GV, \Ind_K^GV)$ and   $\bigoplus_{\sigma\in J}M_{m_\sigma,m_\sigma}(\mathbb{C})$.

Let $(\theta,V)$ be an irreducible representation of $K \leq G$  and  $(\sigma, W)$ an irreducible representation of $G$.  Consider $L_1, L_2 \in \Hom_K(V, \Res_K^GW)$. 
Then  we have  $L_1^* \in  \Hom_K(\Res_K^GW, V)$ and 
\[
\Ind_K^G V \stackrel{\overset{\diamond}{L_2}}{\longrightarrow} W  \stackrel{(L_1^*)^\vee}{\longrightarrow} \Ind_K^GV,
\]
that is  ($\overset{\diamond}{L_1})^* \overset{\diamond}{L_2} = (L_1^*)^\vee \overset{\diamond}{L_2} \in 
\Hom_G(\Ind_K^GV, \Ind_K^GV)$.

\begin{lemma}\label{lemma3}
Let   $(\sigma_1, W_1)$ and  $(\sigma_2, W_2)$  be two irreducible inequivalent  representations  of $G$. Consider 
  $L_1, L_2 \in \Hom_K(V, \Res_K^G W_2)$ and  $L_3, L_4 \in \Hom_K(V, \Res_K^GW_1)$. Then 
  
\begin{equation}\label{L3L1}
\overset{\diamond}{L_3}( \overset{\diamond}{L_1})^*=0
\end{equation}
and
  
\[
\left\langle ( \overset{\diamond}{L_1})^* \overset{\diamond}{L_2},  (\overset{\diamond}{L_3})^* \overset{\diamond}{L_4}\right \rangle = 0.
\]
\end{lemma}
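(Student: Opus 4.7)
The plan is to reduce both statements to Schur's lemma by tracking which Hom-spaces the various operators live in. By Corollary \ref{corotherside}, each $\overset{\diamond}{L_i}$ is an intertwining operator: specifically $\overset{\diamond}{L_1},\overset{\diamond}{L_2}\in\Hom_G(\Ind_K^G V,W_2)$ and $\overset{\diamond}{L_3},\overset{\diamond}{L_4}\in\Hom_G(\Ind_K^G V,W_1)$. Since adjoints of intertwiners are intertwiners (by \eqref{adjoint}), we get $(\overset{\diamond}{L_1})^*\in\Hom_G(W_2,\Ind_K^G V)$, so the composition $\overset{\diamond}{L_3}(\overset{\diamond}{L_1})^*$ belongs to $\Hom_G(W_2,W_1)$. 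By Schur's lemma, the irreducibility and inequivalence of $(\sigma_1,W_1)$ and $(\sigma_2,W_2)$ force $\Hom_G(W_2,W_1)=0$, which is exactly \eqref{L3L1}.

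For the second identity, I would expand the normalized Hilbert-Schmidt scalar product on $\Hom_G(\Ind_K^GV,\Ind_K^GV)$ as
\[
\left\langle(\overset{\diamond}{L_1})^*\overset{\diamond}{L_2},\,(\overset{\diamond}{L_3})^*\overset{\diamond}{L_4}\right\rangle=\frac{1}{\dim\Ind_K^GV}\tr\!\left((\overset{\diamond}{L_4})^*\overset{\diamond}{L_3}(\overset{\diamond}{L_1})^*\overset{\diamond}{L_2}\right),
\]
and then either apply the cyclicity of the trace to isolate the factor $\overset{\diamond}{L_3}(\overset{\diamond}{L_1})^*$ in the middle, or just observe directly that this factor is $0$ by \eqref{L3L1}. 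Either way, the whole product is the zero operator and the inner product vanishes.

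There is no real obstacle here: the entire content is that the $\diamond$-construction takes values in $\Hom_G$, so adjoints preserve this property and the composition lives in a Hom-space between inequivalent irreducibles, where Schur's lemma kicks in. The only thing to be careful about is getting the source and target of each operator right when stacking the four factors.
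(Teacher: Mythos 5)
Your proof is correct and follows essentially the same route as the paper: Schur's lemma applied to $\overset{\diamond}{L_3}(\overset{\diamond}{L_1})^*\in\Hom_G(W_2,W_1)=\{0\}$ for the first identity, and the normalized Hilbert--Schmidt trace formula with that vanishing factor in the middle for the second. The only difference is that you spell out the bookkeeping of source and target spaces (via Corollary \ref{corotherside} and \eqref{adjoint}) that the paper leaves implicit.
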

\begin{proof}
By Schur's lemma, $\overset{\diamond}{L_3}( \overset{\diamond}{L_1})^*  \in  \Hom_G(W_2, W_1) = \{0\}$. Moreover, by definition of scalar product in  $ \Hom_G(\Ind_K^GV, \Ind_K^G V)$ we have 
\[
\left\langle ( \overset{\diamond}{L_1})^* \overset{\diamond}{L_2},  (\overset{\diamond}{L_3})^* \overset{\diamond}{L_4}\right\rangle 
=\frac{1}{\dim \Ind_K^GV}\tr\left[( \overset{\diamond}{L_4})^* \overset{\diamond}{L_3}( \overset{\diamond}{L_1})^* \overset{\diamond}{L_2}\right]=0.
\]
\end{proof}

\begin{lemma}\label{lemma4}
Let $(\sigma, W)$ be an irreducible representation of $G$  and $\{L_1, L_2, \ldots, L_m\}$ an orthonormal basis of $\Hom_K(V, \Res_K^G W)$.
Then

\begin{equation}\label{LdiamLdiamst}
\overset{\diamond}{L_h}(\overset{\diamond}{L_i})^*  =\frac{d_\theta}{d_\sigma} I_W\delta_{i,h}.
\end{equation}
and the operators $ (\overset{\diamond}{L_i})^* \overset{\diamond}{L_j} \in \Hom_G(\Ind_K^GV, \Ind_K^G V)$  satisfy the orthogonality relations:
\[
\left\langle  (\overset{\diamond}{L_i})^* \overset{\diamond}{L_j},  (\overset{\diamond}{L_h})^* \overset{\diamond}{L_\ell}\right\rangle = \delta_{i,h}\delta_{j,\ell}\frac{d_\theta}{d_\sigma|G/K|}.
\]
\end{lemma}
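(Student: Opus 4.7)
The plan for \eqref{LdiamLdiamst} starts from the observation that $\overset{\diamond}{L_h}\in\Hom_G(\Ind_K^G V,W)$ and, by \eqref{adjoint}, $(\overset{\diamond}{L_i})^*\in\Hom_G(W,\Ind_K^G V)$, so their composition lies in $\Hom_G(W,W)$. Since $W$ is irreducible, Schur's lemma forces $\overset{\diamond}{L_h}(\overset{\diamond}{L_i})^* = \lambda_{h,i}I_W$ for some scalar $\lambda_{h,i}\in\CC$; taking traces, $\lambda_{h,i}d_\sigma = \tr(\overset{\diamond}{L_h}(\overset{\diamond}{L_i})^*) = \tr((\overset{\diamond}{L_i})^*\overset{\diamond}{L_h})$, so the whole problem reduces to evaluating this last trace.

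To compute it I would exploit the scaling relation \eqref{normcircle} from Corollary \ref{corotherside}. Since $\square$ inverts $\diamond$, substituting $T_j=\overset{\diamond}{L_j}$ in \eqref{normcircle} yields $\langle L_h,L_i\rangle = |G/K|\,\langle\overset{\diamond}{L_h},\overset{\diamond}{L_i}\rangle$, and the hypothesis that $\{L_1,\dots,L_m\}$ is orthonormal together with $\dim\Ind_K^G V = |G/K|\,d_\theta$ gives
\[
\frac{1}{|G/K|\,d_\theta}\tr\bigl((\overset{\diamond}{L_i})^*\overset{\diamond}{L_h}\bigr)=\langle\overset{\diamond}{L_h},\overset{\diamond}{L_i}\rangle=\frac{\delta_{i,h}}{|G/K|}.
\]
Hence $\tr((\overset{\diamond}{L_i})^*\overset{\diamond}{L_h}) = d_\theta\,\delta_{i,h}$ and consequently $\lambda_{h,i}=\frac{d_\theta}{d_\sigma}\delta_{i,h}$, establishing \eqref{LdiamLdiamst}.

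For the orthogonality relations among the operators $(\overset{\diamond}{L_i})^*\overset{\diamond}{L_j}$ I unwind the definition of the normalized Hilbert--Schmidt inner product on the commutant and apply \eqref{LdiamLdiamst} to collapse the middle of the resulting trace:
\[
\left\langle(\overset{\diamond}{L_i})^*\overset{\diamond}{L_j},\,(\overset{\diamond}{L_h})^*\overset{\diamond}{L_\ell}\right\rangle = \frac{1}{|G/K|\,d_\theta}\tr\bigl[(\overset{\diamond}{L_\ell})^*\overset{\diamond}{L_h}(\overset{\diamond}{L_i})^*\overset{\diamond}{L_j}\bigr].
\]
Substituting $\overset{\diamond}{L_h}(\overset{\diamond}{L_i})^*=\frac{d_\theta}{d_\sigma}\delta_{i,h}I_W$ reduces the right-hand side to $\frac{1}{|G/K|\,d_\theta}\cdot\frac{d_\theta}{d_\sigma}\delta_{i,h}\cdot\tr((\overset{\diamond}{L_\ell})^*\overset{\diamond}{L_j})$, and invoking once more the trace identity $\tr((\overset{\diamond}{L_\ell})^*\overset{\diamond}{L_j})=d_\theta\,\delta_{j,\ell}$ derived in the previous step yields the claimed $\frac{d_\theta}{d_\sigma|G/K|}\delta_{i,h}\delta_{j,\ell}$.

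The main obstacle is purely bookkeeping: one must reconcile the non-isometric factor $|G/K|$ coming from Corollary \ref{corotherside} with the factor $\dim\Ind_K^G V = |G/K|\,d_\theta$ built into the normalized Hilbert--Schmidt product, and keep track of which trace is being computed at each step. Once these constants are lined up, both formulas fall out cleanly from Schur's lemma together with a single application of \eqref{LdiamLdiamst}.
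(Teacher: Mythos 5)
Your argument is correct and lands on the same machinery as the paper, just threaded through a slightly different citation path. For \eqref{LdiamLdiamst} the paper cites Corollary~\ref{corollario2} together with \eqref{orthdeltaij}: an orthonormal basis $\{L_i\}$ yields an isometric orthogonal decomposition of the $V$-component of $\Res_K^G W$, Corollary~\ref{corollario2} transfers this to the operators $T_i=\sqrt{d_\sigma/d_\theta}\,(\overset{\diamond}{L_i})^*$, and \eqref{orthdeltaij} then gives $T_j^*T_i=\delta_{i,j}I_W$ directly. You instead apply Schur's lemma to $\overset{\diamond}{L_h}(\overset{\diamond}{L_i})^*\in\Hom_G(W,W)$ and evaluate the constant by a trace, pinning it down via \eqref{normcircle} and the normalization $\dim\Ind_K^G V = |G/K|\,d_\theta$ rather than via the isometric-decomposition chain. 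Since Lemma~\ref{lemma1} is itself a Schur-plus-trace argument, the two routes are essentially equivalent in substance; yours has the modest advantage of not needing the full "isometric orthogonal decomposition" language, while the paper's version reuses the packaging already built in Section~\ref{SecorthFrob}. Your second computation (unwind the normalized Hilbert--Schmidt product, collapse the middle with \eqref{LdiamLdiamst}, then evaluate the remaining trace) coincides step for step with the paper's, which at the final stage cites item 2 of Corollary~\ref{corollario1} for $\langle\overset{\diamond}{L_j},\overset{\diamond}{L_\ell}\rangle=\delta_{j,\ell}/|G/K|$, the same fact you rederived from \eqref{normcircle}.
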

\begin{proof}
The identity \eqref{LdiamLdiamst} follows from \eqref{orthdeltaij} and Corollary \ref{corollario2}.
Moreover,

\[
\begin{split}
\left\langle (\overset{\diamond}{L_i})^* \overset{\diamond}{L_j},  (\overset{\diamond}{L_h})^* \overset{\diamond}{L_\ell}\right\rangle &  = \frac{1}{\dim \Ind_K^G V}\tr\left[( \overset{\diamond}{L_\ell})^* \overset{\diamond}{L_h}
( \overset{\diamond}{L_i})^* \overset{\diamond}{L_j}\right]\\
(\text{by }\eqref{LdiamLdiamst})\qquad&=\frac{ \delta_{i,h}d_\theta}{d_\sigma \dim \Ind_K^G V}\tr\left[( \overset{\diamond}{L_\ell})^* \overset{\diamond}{L_j}\right]\\
  & = \delta_{i,h}\frac{d_\theta}{d_\sigma}\langle \overset{\diamond}{L}_j, \overset{\diamond}{L}_\ell\rangle\\
\mbox{ (by 2. in Corollary \ref{corollario1})}\ \ \ \ & =  \delta_{i,h}\delta_{j, \ell} \frac{d_\theta}{d_\sigma|G/K|}.
\end{split}
\]
\end{proof}

In what follows, we denote by $M_{m,m}(\CC)$ the algebra of all $m\times m$ complex matrices.
Let $\Ind_K^GV = \bigoplus_{\sigma \in J}m_\sigma W_\sigma$ be the decomposition  of $\Ind_K^GV$ into irreducible representations of $G$ (i.e., $\{(\sigma, W_\sigma):\sigma\in J\}$ is  a complete  set of all irreducible inequivalent  representations of $G$ contained in $\Ind_K^GV$ and $m_\sigma$ is the multiplicity of $W_\sigma$  in $\Ind_K^GV$).
For every $\sigma \in J$ select an orthonormal basis

\begin{equation}\label{choicebasis} 
\{L_{\sigma, 1}, L_{\sigma, 2}, \ldots, L_{\sigma, m_\sigma}\}
\end{equation}
of $\Hom_K(V, \Res_K^GW_\sigma)$ and set

\begin{equation}\label{quad11}
U^\sigma_{i,j}=\frac{d_\sigma}{d_\theta}(\overset{\diamond}L_{\sigma, i})^* \overset{\diamond}L_{\sigma, j},
\end{equation}
for $i,j=1,2,\dotsc,m_\sigma$. For every $T\in \Hom_G(\Ind_K^G V, \Ind_K^GV)$ and $\sigma\in J$, the {\em Fourier transform} of $T$ at $\sigma$ associated to the choice of \eqref{choicebasis} is the following matrix in $M_{m_\sigma,m_\sigma}(\CC)$:

\[
[\mathcal{F}T(\sigma)]_{i,j}=\frac{d_\theta|G/K|}{d_\sigma}\langle T,U^\sigma_{i,j} \rangle,\qquad\quad i,j=1,2,\dotsc,m_\sigma.
\]
In the following theorem we will show that the Fourier transform is an explicit form of the isomorphism

\begin{equation}\label{isomorphism}
\Hom_G(\Ind_K^G V, \Ind_K^GV)  = \bigoplus_{\sigma \in J}\Hom_G(m_\sigma W_\sigma, m_\sigma W_\sigma)\cong \bigoplus_{\sigma \in J}M_{m_\sigma,m_\sigma}(\CC).
\end{equation}
We need further notation. Every element in the algebra $\bigoplus_{\sigma \in J}M_{m_\sigma,m_\sigma}(\CC)$ may be represented in the form $\bigoplus_{\sigma \in J}A_\sigma$, where $A_\sigma\in M_{m_\sigma,m_\sigma}(\CC)$. In particular,
 given  $T $ in $\Hom_G(\Ind_K^GV, \Ind_K^GV)$, we set 
\begin{equation}\label{2quad11}
\mathcal{F}T = \bigoplus_{\sigma \in J}\mathcal{F}T(\sigma).
\end{equation}
We recall \cite{book2} that the irreducible representations of this algebra are given by the natural action of each $M_{m_\sigma,m_\sigma}(\CC)$ on $\CC^{m_\sigma}$ and that \cite{st4} the corresponding irreducible characters are the functions $\{\varphi^\sigma:\sigma\in J\}$ given by: $\varphi^\rho\left(\bigoplus_{\sigma \in J}A_\sigma\right)=\text{tr}(A_\rho)$. Under the isomorphism \eqref{isomorphism}, the  irreducible representation of $\Hom_G(\Ind_K^G V, \Ind_K^GV)$ corresponding to $\sigma \in J$ is given by its action on the space $\Hom_G(W_\sigma,\Ind_K^GV)$, that is by the map $S\mapsto TS$, where $T\in \Hom_G(\Ind_K^G V, \Ind_K^GV)$ and $S\in \Hom_G(W_\sigma V, \Ind_K^GV)$.  In what follows, we will indicate by $\varphi^\sigma$ also the character of the isomorphic algebra $\Hom_G(\Ind_K^G V, \Ind_K^GV)$.

\begin{theorem}\label{teorema5}
\begin{enumerate}
\item{
The set 
\begin{equation}\label{stellaH14}
\left\{\sqrt{\frac{d_\theta|G/K|}{d_\sigma}}U^\sigma_{i,j}: \sigma \in  J, i,j = 1, 2  \ldots, m_\sigma\right\}
\end{equation}
is an orthonormal  basis  of  $\Hom_G(\Ind_K^GV, \Ind_K^G V)$. In particular, the {\em Fourier inversion formula} is:

\[
T=\sum_{\sigma\in J}\sum_{i,j=1}^{m_\sigma}[\mathcal{F}T(\sigma)]_{i,j}U^\sigma_{i,j}.
\]

}

\item{ Setting $T_{\sigma,i}=\sqrt{\frac{d_\sigma}{d_\theta}}(\overset{\diamond}L_{\sigma, i})^*$, we have  the isometric orthogonal decomposition
\[
\Ind_K^GV = \bigoplus_{\sigma \in J}\bigoplus_{i = 1}^{m_\sigma}T_{\sigma, i} W_\sigma,
\]
and the corresponding explicit isomorphism

\[
\begin{array}{ccc}
\Hom_G(\Ind_K^G V, \Ind_K^GV)  & \longrightarrow & \bigoplus_{\sigma \in J}M_{m_\sigma,m_\sigma}(\CC)\\
&&\\
 T& \longmapsto & \mathcal{F}T.
\end{array}
\]}

\item{ The operator $U^\sigma_{i,j}$ intertwines the subspace 
$T_{\sigma, j} W_\sigma$  with   $T_{\sigma, i} W_\sigma$.}

\item{The operator $U^\sigma_{i,i}$ is the orthogonal projection of $\Ind_K^GV$ onto $T_{\sigma, i} W_\sigma$.}

\item{The irreducible characters of $\Hom_G(\Ind_K^G V, \Ind_K^GV)$ are the functions $\{\varphi^\sigma: \sigma\in J\}$ given by:

\[
\varphi^\sigma(T)=\text{\rm tr}\left[\mathcal{F}T(\sigma)\right],
\]
for every $T\in\Hom_G(\Ind_K^G V, \Ind_K^GV)$.
 }
\end{enumerate}
\end{theorem}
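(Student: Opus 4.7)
The plan is to deduce items (1)--(5) from the scalar product identities in Lemmas \ref{lemma3} and \ref{lemma4}, combined with the orthogonality framework of Corollaries \ref{corollario1} and \ref{corollario2}. All five parts are essentially bookkeeping around the factorization $U^\sigma_{i,j}=T_{\sigma,i}T_{\sigma,j}^*$, which falls straight out of the definitions.

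For (1), the point is orthonormality. Lemma \ref{lemma3} rules out any cross term $\langle U^\sigma_{i,j}, U^\rho_{h,k}\rangle$ with $\sigma\neq\rho$; Lemma \ref{lemma4}, after absorbing the factor $d_\sigma/d_\theta$ built into \eqref{quad11}, yields $\langle U^\sigma_{i,j}, U^\sigma_{h,k}\rangle = \frac{d_\sigma}{d_\theta|G/K|}\,\delta_{i,h}\delta_{j,k}$. Rescaling by $\sqrt{d_\theta|G/K|/d_\sigma}$ then produces an orthonormal family, and it is a basis by the dimension count $\sum_{\sigma\in J}m_\sigma^2=\dim\Hom_G(\Ind_K^GV,\Ind_K^GV)$. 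The Fourier inversion formula is then simply the expansion of $T$ in this orthonormal basis, with Parseval coefficient $\sqrt{d_\theta|G/K|/d_\sigma}\,\langle T,\sqrt{d_\theta|G/K|/d_\sigma}\,U^\sigma_{i,j}\rangle = [\mathcal{F}T(\sigma)]_{i,j}$.

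For (2), identity \eqref{pastrocchio} rewrites $T_{\sigma,j}=\sqrt{d_\sigma/d_\theta}\,(\overset{\diamond}L_{\sigma,j})^{*}$ as $\sqrt{d_\sigma/d_\theta}\,(L_{\sigma,j}^{*})^{\vee}$, so Corollary \ref{corollario2} applied to the orthonormal basis \eqref{choicebasis} guarantees that the $T_{\sigma,j}$'s realize an isometric orthogonal decomposition of the $W_\sigma$-isotypic component of $\Ind_K^GV$; summing over $\sigma\in J$ gives the stated global decomposition. The map $T\mapsto\mathcal{F}T$ is the coefficient map in the orthonormal basis of (1), hence a linear isometry into $\bigoplus_{\sigma\in J}M_{m_\sigma,m_\sigma}(\CC)$; its multiplicativity will be recorded via (5). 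For (3) and (4) the crucial identity is the factorization $U^\sigma_{i,j}=T_{\sigma,i}T_{\sigma,j}^{*}$, which is immediate from the definitions. Since each $T_{\sigma,j}$ is an isometric embedding, Lemma \ref{lemma1} gives $T_{\sigma,h}^{*}T_{\sigma,j}=\delta_{h,j}\,I_{W_\sigma}$, while Schur's lemma forces $T_{\sigma,j}^{*}$ to annihilate every isotypic summand other than $T_{\sigma,j}W_\sigma$. Thus $U^\sigma_{i,j}$ acts on $\Ind_K^GV$ by sending $T_{\sigma,j}w\mapsto T_{\sigma,i}w$ and killing the orthogonal complement, which is (3); specializing $i=j$ makes $U^\sigma_{i,i}$ a self-adjoint idempotent with range $T_{\sigma,i}W_\sigma$, which is (4).

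For (5), I would first compute $\mathcal{F}$ on the generators: the scalar product formulas in (1) give $[\mathcal{F}U^\sigma_{i,j}(\rho)]_{h,k}=\delta_{\sigma,\rho}\delta_{i,h}\delta_{j,k}$, so that $\mathcal{F}U^\sigma_{i,j}$ is the matrix unit $E_{i,j}$ located in the $\sigma$-block. Linearity of $\mathcal{F}$ together with the inversion formula of (1) then yields $\mathcal{F}T=\bigoplus_{\sigma\in J}\mathcal{F}T(\sigma)$ in agreement with \eqref{2quad11}. The character identity $\varphi^\sigma(T)=\tr[\mathcal{F}T(\sigma)]$ is then immediate from the description of the irreducible characters of $\bigoplus_{\sigma\in J}M_{m_\sigma,m_\sigma}(\CC)$ recalled just before the theorem. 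The only real obstacle throughout is keeping the three constants $d_\sigma$, $d_\theta$ and $|G/K|$ aligned: the normalization in \eqref{quad11} is engineered precisely so that the $U^\sigma_{i,j}$ map to matrix units, and the expected multiplicativity $\mathcal{F}(T_1T_2)=\mathcal{F}T_1\cdot\mathcal{F}T_2$ can be read off from $U^\sigma_{i,j}U^\rho_{h,k}=\delta_{\sigma,\rho}\delta_{j,h}U^\sigma_{i,k}$, which in turn follows from \eqref{LdiamLdiamst} once the constants are tracked.
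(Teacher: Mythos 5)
Your argument is correct and follows essentially the same route as the paper: orthonormality of \eqref{stellaH14} from Lemmas \ref{lemma3} and \ref{lemma4} together with the dimension count, the factorization $U^\sigma_{i,j}=T_{\sigma,i}T_{\sigma,j}^{*}$ with $T_{\sigma,j}^{*}T_{\sigma,h}=\delta_{j,h}I_{W_\sigma}$ (the paper packages this as the identities $U^\sigma_{i,j}U^\rho_{h,l}=\delta_{\sigma,\rho}\delta_{j,h}U^\sigma_{i,l}$ and $U^\sigma_{i,j}T_{\rho,h}=\delta_{\sigma,\rho}\delta_{j,h}T_{\sigma,i}$), and Corollary \ref{corollario2} for the isometric decomposition. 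One small misstatement in your write-up: $\mathcal{F}$ is \emph{not} a linear isometry onto $\bigoplus_{\sigma\in J}M_{m_\sigma,m_\sigma}(\CC)$ with the plain entrywise norm, because $[\mathcal{F}T(\sigma)]_{i,j}$ differs from the genuine Parseval coefficient $\bigl\langle T,\sqrt{d_\theta|G/K|/d_\sigma}\,U^\sigma_{i,j}\bigr\rangle$ by the $\sigma$-dependent factor $\sqrt{d_\theta|G/K|/d_\sigma}$; it is a linear bijection, which together with the multiplicativity you derive from $U^\sigma_{i,j}U^\rho_{h,k}=\delta_{\sigma,\rho}\delta_{j,h}U^\sigma_{i,k}$ is all the theorem actually claims, so the conclusion is unaffected.
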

\begin{proof} From Lemma \ref{lemma3} and Lemma \ref{lemma4} we deduce that the set \eqref{stellaH14} is orthonormal. Moreover, $\dim \Hom_G(\Ind_K^GV, \Ind_K^G V) = \sum_{\sigma \in J}m_\sigma^2$ so that  it is a basis.  For the other assertions just note that $T_{\sigma, i}$ is an isometry and that, by \eqref{L3L1} and \eqref{LdiamLdiamst}, we have

\[
U^\sigma_{i,j}U^\rho_{h,l}=\delta_{\sigma,\rho}\delta_{j,h}U^\sigma_{i,l},
\]

\[
U^\sigma_{i,j}T_{\rho,h}=\delta_{\sigma,\rho}\delta_{j,h}T_{\sigma, i},
\]
and therefore 

\[
TT_{\sigma, j}=\sum_{i=1}^{m_\sigma}[\mathcal{F}T(\sigma)]_{i,j}T_{\sigma, i},
\]
for all $\sigma,\rho\in J$, $i,j=1,2,\dotsc, m_\sigma$, $h,l=1,2,\dotsc,m_\rho$ and $T\in\Hom_G(\Ind_K^GV,\Ind_K^GV)$.

\end{proof}

\section{Harmonic analysis in the Hecke algebra}\label{SecHecke}

Let $G$ be a finite group and $K \leq G$ a subgroup. As in the previous sections we denote  by $(\theta,V)$ an irreducible $K$-representation   and by $(\sigma,W)$ an irreducible  $G$-representation.
 The left regular representation of 
$G$ on $L(G)$ is denoted by   $\lambda_G$  (to distinguish it  from $\lambda =\Ind_K^G \theta$); it  is defined by setting
$[\lambda_G(g)f](g_0) = f(g^{-1}g_0)$ for all $f \in L(G)$, $g, g_0 \in G$.

We choose $v \in V$ with $\|v\| = 1$ and define $\psi \in L(K)$  by setting
\[
\psi(k) = \frac{d_\theta}{|K|}\langle v , \theta(k) v\rangle, \  \  \  \forall k \in K.
\]
Since $L(K) \subseteq L(G)$, we may consider $\psi$ also as a function in $L(G)$. We define the operator 
\[
T_v: \Ind_K^GV \longrightarrow  L(G)
\]
by setting:
\[
(T_vf)(g) = \sqrt{d_\theta/|K|}\langle f(g), v\rangle
\]
for all $f \in \Ind_K^GV$,  $g \in G$ ($v$ is the same as in the definition of $\psi$).
 The following projection formula will be a very  useful tool in many occasions.
\begin{lemma}\label{lemmaD5}
If $v\in V$ has norm 1, then  we have 
\begin{equation}\label{eD5}
\sum_{k \in K}\langle \theta(k) u, v\rangle \theta(k^{-1})v = \frac{|K|}{d_\theta}u,
\end{equation}
for all $u \in V$.
\end{lemma}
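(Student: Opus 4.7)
The plan is to recognize the left-hand side of \eqref{eD5} as a $K$-intertwining operator on $V$, then invoke Schur's lemma together with a trace computation. Concretely, I would define the linear map $S: V \to V$ by
\[
Su = \sum_{k \in K}\langle \theta(k) u, v\rangle \theta(k^{-1})v,
\]
and first verify that $S \in \Hom_K(V,V)$. To do this, fix $h \in K$ and substitute $u \mapsto \theta(h)u$: a change of variable $k \mapsto kh^{-1}$ in the sum (together with the identity $\theta((kh^{-1})^{-1}) = \theta(h)\theta(k^{-1})$) moves $\theta(h)$ outside, giving $S\theta(h) = \theta(h)S$. Since $(\theta,V)$ is irreducible, Schur's lemma yields $S = c \, I_V$ for some scalar $c \in \CC$.

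To identify $c$, I would compute $\tr(S)$ in two ways. On one side $\tr(cI_V) = c \, d_\theta$. On the other, using an orthonormal basis $\{v_1,\dotsc,v_{d_\theta}\}$ of $V$,
\[
\tr(S) = \sum_{i=1}^{d_\theta}\langle Sv_i, v_i\rangle = \sum_{k \in K}\sum_{i=1}^{d_\theta}\langle \theta(k)v_i, v\rangle \langle \theta(k^{-1})v, v_i\rangle,
\]
and the inner sum over $i$ collapses to $\langle \theta(k^{-1})v, \theta(k^{-1})v\rangle = \|v\|^2 = 1$, leaving $\tr(S) = |K|$. Hence $c = |K|/d_\theta$, which is exactly \eqref{eD5}.

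As an alternative route, essentially equivalent in content, one could expand $u$ and $\theta(k^{-1})v$ in the basis $\{v_j\}$ (choosing $v_1 = v$) and read off the sum via the orthogonality relations \eqref{ORT} for the matrix coefficients $u^\theta_{j,i}$; the terms with $j \neq i$ kill each other and what remains is $\frac{|K|}{d_\theta}\sum_i \alpha_i v_i$, where $u = \sum_i \alpha_i v_i$.

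The proof is short and the main thing to get right is simply the change of variable that establishes $K$-equivariance of $S$; everything else is a direct trace computation. I do not anticipate any real obstacle.
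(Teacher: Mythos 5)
Your proof is correct, and your primary argument (Schur's lemma plus a trace computation) is genuinely different from the paper's, which instead tests $S u$ against an orthonormal basis $\{v_1=v, v_2, \ldots, v_{d_\theta}\}$ and reads off the answer directly from the orthogonality relations \eqref{ORT} applied with $u=v_i$. The two routes are very close in spirit—the orthogonality relations are themselves a consequence of Schur's lemma—so you are essentially arguing one level lower in the deductive chain. What your version buys is that it uses nothing beyond irreducibility and a resolution of the identity: the change of variable $k \mapsto kh^{-1}$ gives $S\theta(h)=\theta(h)S$, Schur forces $S = cI_V$, and $\tr(S)=\sum_{k\in K}\|\theta(k^{-1})v\|^2 = |K|$ gives $c = |K|/d_\theta$; you never need to recall the exact normalization in \eqref{ORT}. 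What the paper's version buys is brevity given that \eqref{ORT} has already been stated: one line of computation and an appeal to linearity. Your remark about the "alternative route" via \eqref{ORT} is in fact exactly the paper's proof. Both arguments tacitly use the irreducibility of $(\theta,V)$ (the standing hypothesis of the paper), yours via Schur and the paper's via the orthogonality relations being stated for irreducibles, so there is no gap on either side.
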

\begin{proof}
Let $\{v_1, v_2, \ldots, v_{d_\theta}\}$ be an orthonormal basis of $V$ with $v = v_1$. Then, for $i,j = 1,2, \ldots, d_\theta$,
we have 
\[
\begin{split}
\left\langle \sum_{k \in K}\langle \theta (k)v_i, v\rangle\theta(k^{-1})v, v_j\right\rangle & = 
\sum_{k \in K}\langle \theta(k) v_i, v_1\rangle \overline{\langle \theta(k) v_j, v_1\rangle}\\
\mbox{(by \eqref{ORT})} \ \ \ \ \ \ \  & =\frac{|K|}{d_\theta} \delta_{i,j}.
\end{split}
\]
Therefore we have proved  \eqref{eD5} when $u = v_i$ and the general case follows  by linearity.
\end{proof}

\begin{proposition}\label{pD2}
\begin{enumerate}
\item{The operator $T_v$ belongs to $\Hom_G(\Ind_K^GV, L(G))$ and it is an isometry.}
\item{The operator $P: L(G) \longrightarrow L(G)$, defined by setting
\[
Pf = f*\psi
\]
for all $f \in L(G)$,  is  the orthogonal projection of $L(G)$ onto $T_v[\Ind_K^GV]$.}
\end{enumerate}
\end{proposition}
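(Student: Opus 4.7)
The plan is to reduce the computations in both parts to the projection formula \eqref{eD5}.

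For part (1), the intertwining property is a direct unwinding of definitions:
\[
(T_v \lambda(g_0) f)(g) = \sqrt{d_\theta/|K|}\langle f(g_0^{-1}g), v\rangle = (T_v f)(g_0^{-1}g) = (\lambda_G(g_0) T_v f)(g).
\]
For the isometry property I would expand $\|T_v f\|^2_{L(G)} = \tfrac{d_\theta}{|K|}\sum_{g \in G}|\langle f(g), v\rangle|^2$, split the sum along a right-coset decomposition $G = \coprod_{t\in \TT} tK$, and use the covariance $f(tk) = \theta(k^{-1})f(t)$ to reduce the inner sum to $\sum_{k \in K}|\langle \theta(k^{-1})f(t), v\rangle|^2$. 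Pairing \eqref{eD5} (applied with $u = f(t)$) with $f(t)$ itself yields $\sum_{k \in K}|\langle \theta(k)f(t), v\rangle|^2 = \tfrac{|K|}{d_\theta}\|f(t)\|_V^2$, and after the substitution $k \mapsto k^{-1}$ this matches the inner sum; hence $\|T_v f\|^2 = \sum_{t\in\TT}\|f(t)\|_V^2 = \|f\|^2_{\Ind_K^G V}$ by \eqref{H33}.

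For part (2), the core step is the identity $(T_v f)*\psi = T_v f$ for $f \in \Ind_K^G V$. Using $f(gk^{-1}) = \theta(k)f(g)$ and the support of $\psi$ in $K$, I would rearrange
\[
(T_v f * \psi)(g) = \sqrt{d_\theta/|K|}\left\langle f(g),\; \tfrac{d_\theta}{|K|}\sum_{k\in K}\langle \theta(k)v, v\rangle\, \theta(k^{-1})v\right\rangle,
\]
pulling the scalar factor $\langle v, \theta(k)v\rangle = \overline{\langle \theta(k)v, v\rangle}$ out of the first entry of the scalar product. The inner sum equals $\tfrac{|K|}{d_\theta}v$ by \eqref{eD5} with $u = v$, collapsing the right hand side to $\sqrt{d_\theta/|K|}\langle f(g), v\rangle = (T_v f)(g)$. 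This shows $T_v[\Ind_K^G V] \subseteq \ran P$.

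To finish, I would check that $P$ is a self-adjoint idempotent of the correct rank. Self-adjointness $\psi^* = \psi$ (hence $P^* = P$) follows from unitarity of $\theta$ applied directly to the definition of $\psi$, and $\psi*\psi = \psi$ reduces to a sum over $K$ which I would evaluate using the orthogonality relations \eqref{ORT} for the matrix coefficient $u^\theta_{1,1}$, with $v = v_1$ in some orthonormal basis of $V$. Finally $\rk P = \tr P = |G|\psi(1_G) = |G|d_\theta/|K| = |G/K|d_\theta = \dim T_v[\Ind_K^G V]$ by part (1), upgrading the inclusion above to an equality of subspaces. The main obstacle is the convolution computation in part (2): the change of variables and dualization of inner products must be arranged so that \eqref{eD5} applies cleanly with $u = v$; once this is done, the remaining verifications are straightforward.
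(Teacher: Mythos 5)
Your proposal is correct, and it deviates from the paper's proof in two places, both of which are sound. For the isometry in part (1), the paper verifies orthonormality of the image of the explicit orthonormal basis $\{\lambda(t)f_{v_j}\}$ from \eqref{orthbasisind} using the orthogonality relations \eqref{ORT}; you instead compute $\|T_v f\|^2$ for an arbitrary $f$, split over right cosets, and apply \eqref{eD5} directly. Your route is slightly more economical since it avoids recalling the specific basis, while the paper's route keeps all the bookkeeping in terms of matrix coefficients. For the final step of part (2), the paper proves the inclusion $\ran P\subseteq T_v[\Ind_K^GV]$ directly: given $\phi\in L(G)$, it exhibits an explicit $f\in\Ind_K^GV$, namely $f(g) = \sqrt{d_\theta/|K|}\sum_{k\in K}\phi(gk)\theta(k)v$, with $P\phi = T_v f$. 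You instead note that $P$ is a self-adjoint idempotent and count dimensions, computing $\tr P = |G|\psi(1_G) = |G/K|\,d_\theta = \dim T_v[\Ind_K^GV]$. The dimension count is slick and conceptually clean, but the paper's version is constructive — it hands you the preimage $f$, which is the kind of explicit formula the paper relies on elsewhere. One small quibble: you attribute $\psi*\psi=\psi$ to the orthogonality relations \eqref{ORT}, whereas the paper invokes the convolution property \eqref{CON}; since $\psi$ is $\tfrac{d_\theta}{|K|}\overline{u^\theta_{1,1}}$, \eqref{CON} gives the idempotence in one line, so it is the more direct citation (though of course \eqref{CON} itself follows from \eqref{ORT}).
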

\begin{proof}
(1) The first part is obvious: it is immediate to check that 
\[
T_v\lambda(g) f = \lambda_G(g)T_v f.
\]
We now show that  $T_v$ is an isometry:  using the basis in \eqref{orthbasisind}  and assuming that $v = v_1$  we have, for $t_1,t_2 \in \mathcal{T}$, $i,j = 1, 2, \ldots, d_\theta$,
\[
\begin{split}
 \langle T_v\lambda(t_1)f_{v_i}, T_v\lambda(t_2)f_{v_j}\rangle _{L(G)} & = 
\frac{d_\theta}{|K|}\sum_{g \in G}\langle f_{v_i}(t_1^{-1}g), v\rangle\overline{\langle f_{v_j}(t_2^{-1}g), v\rangle}\\
\mbox{(by \eqref{definizione stellata1})} \ \ \ \  & = \frac{d_\theta}{|K|}\delta_{t_1, t_2}\sum_{k  \in K}\langle \theta(k^{-1}) v_i, v_1\rangle \overline{\langle \theta(k^{-1})v_j, v_1\rangle}\\
\mbox{(by \eqref{ORT})} \ \ \ \  & = \delta_{t_1, t_2}\delta_{i,j}.
\end{split}
\]

(2)  First of all,  note that 
\[
\psi * \psi = \psi \ \ \ \ \ \ \mbox{ and } \ \ \ \ \overline{\psi(g^{-1 })} = \psi(g).
\]

The first identity follows from \eqref{CON} applied to $\theta$ and ensures that $P$ is an idempotent; from the second identity we deduce that $P$ is selfadjoint, and therefore it is an orthogonal projection. Moreover, 
for all $f \in \Ind_K^GV,$ $g \in G$ we have 
\[
\begin{split}
[(T_vf)*\psi](g) & = \left(\frac{d_\theta}{|K|}\right)^{3/2}\sum_{k \in K} \langle f(gk^{-1}), v\rangle
\langle v, \theta(k) v\rangle\\
\mbox{(by \eqref{H31})} \ \ \ \ \ \ \ & = \left(\frac{d_\theta}{|K|}\right)^{3/2}\left\langle f(g), \sum_{k \in K} \langle  \theta(k)v,v \rangle \theta(k^{-1})v \right\rangle\\
\mbox{(by \eqref{eD5})} \ \ \ \ \ \ \  & = (T_vf)(g)
\end{split}
\]
that is,  $PT_v f = T_v f$ for all $f \in \Ind_K^GV$ (and in particular  $\ran P\supseteq T_v\Ind_K^GV$) . Finally, let us show that the range of $P$ is contained in (and therefore equal to) $T_v[\Ind_K^GV]$. Indeed, for all $\phi \in L(G)$, $g\in G$,
\[
\begin{split}
P\phi(g) & =  \sum_{k \in K}\phi(gk)\psi(k^{-1}) \\
& = \frac{d_\theta}{|K|}\left\langle\sum_{k\in K}\phi(gk)\theta(k)v, v\right\rangle\\
& = T_vf(g)
\end{split}
\]
if  $f(g) = \sqrt{\frac{d_\theta}{|K|}} \sum_{k \in K}\phi(gk)\theta(k)v$. Since it is immediate to check that $f$ belongs to $\Ind_K^G V$, we conclude that $\ran P = T_v\left[\Ind_K^GV\right]$.

\end{proof}

\begin{remark}{\rm
We   want  to relate the operator $T_v$ in the context of the results in Section 3. First note that the choice of $v$
is equivalent to the choice of an isometry $L \in \Hom(\CC,V)$, namely  
$L(\alpha) = \alpha v$ for $\alpha \in \CC$. Then, with $K,V, G, W$ replaced by ${1_K}, \CC,  K, V$ we have:

\[
[(L^*)^ \vee u](k) = \frac{1}{\sqrt{|K|}}\langle u, \theta(k)v\rangle
\]
for all $u \in V$ and $k \in K$ (clearly, $L^*u = \langle u,v\rangle$ for all $u \in V$). In particular, the map $S_v =  \sqrt{d_\theta}(L^*)^\vee$ is an isometric immersion of $V$ into $L(K)$ (this fact is also an easy consequence of the  orthogonality  relation for matrix coefficients). Considering  $S_v$  also as a map  in $\Hom_K(V, \Res_K^G L(G))$
(because $L(K) \subseteq \Res_K^GL(G)$ in the natural way), it is easy to prove that  $T_v = \sqrt{\frac{|G|}{|K|}}\overset{\diamond}{S_v}$, where  in this case we apply the machinery in Section 3 with $K, V$ and $G$ as in that section, but  with $W$ replaced by $L(G)$. Indeed, 
for $f \in \Ind_K^G V$ we have:
\[
\begin{split}
\sqrt{\frac{|G|}{|K|}}[\overset{\diamond}{S_v}f ](g)& =\left[\sum_{t\in \mathcal{T}}\lambda_G(t)S_v f(t)\right](g)\\
& = \sum_{t\in \mathcal{T}}[S_v f(t)](t^{-1}g)\\
& = \frac{{\sqrt{|d_\theta|}}} {\sqrt{|K|}}\sum_{\substack{t\in \mathcal{T}:\\ t^{-1}g \in K}}\langle f(t), \theta (t^{-1}g)v\rangle\\
\mbox{(setting $k_g = t^{-1}g$)} \ \ \ \ \ & = \frac{{\sqrt{|d_\theta|}}} {\sqrt{|K|}}\langle f(gk_g^{-1}), \theta (k_g)v\rangle\\
& = \frac{{\sqrt{|d_\theta|}}} {\sqrt{|K|}}\langle  f(g), v\rangle\\
& = T_vf(g).
\end{split}
\]}
\end{remark}

In the terminology of \cite{CURFOS, CR2, st4}, $\psi$ is a primitive idempotent,  $S_v(V) = \{f \in L(K): f *\psi = f\}$ is the minimal left ideal in $L(K)$ generated by $\psi$  and $T_v[\Ind_K^GV]$ is generated by $\psi$ as a left ideal in $L(G)$.
Then, the {\it Hecke algebra} $\mathcal{H}(G,K, \psi)$ is by definition
\[
\mathcal{H}(G,K, \psi) = \{\psi*f*\psi: f \in L(G)\} \equiv\{f \in L(G): f = \psi*f*\psi\}.
\]
It is well known that $\mathcal{H}(G,K,\psi)$ is antiisomorphic to $\Hom_G(\Ind_K^GV, \Ind_K^GV)$: we now want to  go further  and develop a suitable harmonic analysis in $\mathcal{H}(G,K,\psi)$, by translating the results of Section 4.\\

Now we introduce a suitable orthonormal basis in each $G$-irreducible representation. We divide the description of these bases in various cases.

Suppose that $\sigma\in J$ and that $\text{Res}_K^GW_\sigma=m_\sigma V\oplus\left(\oplus_{\rho\in R}m_\rho U_\rho\right)$ is the decomposition of $\text{Res}_K^GW_\sigma$ into irreducible $K$-representations, where $R$ contains the representations different from $\sigma$. Let $\{L_{\sigma,1},L_{\sigma,2},\dotsc,L_{\sigma,m_\sigma}\}$ be as in \eqref{choicebasis} and $v$ as above. We begin by introducing an orthonormal basis in the $V$-{\em isotypic component}. The first step consists in setting

\[
w_i^\sigma=L_{\sigma,i}v,\qquad i=1,2,\dotsc,m_\sigma.
\]
In the second and last step we introduce (see also Lemma  \ref{lemmaD5}) an orthonormal basis $v_1,v_2,\dotsc,v_{d_\theta}$ of $V$ with $v=v_1$ and we suppose that $\{w_h^\sigma:m_\sigma+1\leq h\leq m_\sigma d_\theta\}$ is an {\em arbitrary arrangement} of the vectors $\{L_{\sigma,i}v_j:1\leq i\leq m_\sigma,2\leq j\leq d_\theta\}$. The final result 

\[
\{w_h^\sigma:1\leq h\leq m_\sigma d_\theta\}
\] 
is the desired orthonormal basis in $m_\sigma V$.\\

Then we repeat the construction for each $U_\rho,\rho\in R$, without an initial choice of a vector in $U_\rho$ (we avoid the first step): we select an orthonormal basis $\{M_{\rho,1},M_{\rho,2},\dotsc,M_{\rho,m_\rho}\}$ for $\text{Hom}_K(U_\rho,\text{Res}_K^GW_\sigma)$, an orthonormal basis $\{u^\rho_1,u^\rho_2,\dotsc,u_{d_\rho}^\rho\}$ in $U_\rho$ and we suppose that

\[
\{w_h^\sigma: m_\sigma d_\theta+1\leq h\leq d_\sigma\}
\]
is an {\em arbitrary arrangement} of the vectors $\{M_{\rho,i}u_j^\rho:\rho\in R, 1\leq i\leq m_\rho,1\leq j\leq d_\rho \}$. The final result is an orthonormal basis $\{w_h^\sigma:1\leq h\leq d_\sigma\}$ for $W_\sigma$: we say that it is {\em adapted to the choice of} $v$ {\em and of} $\{L_{\sigma,1},L_{\sigma,2},\dotsc,L_{\sigma,m_\sigma}\}$.\\

If $\sigma\notin J$ then $\{w_h^\sigma:1\leq h\leq d_\sigma\}$ is an {\em arbitrary} orthonormal basis of $W_\sigma$. The importance of such bases is in the following property.\\

\begin{lemma}\label{lD10}
\begin{enumerate}
\item{If $\sigma \in J$, $1\leq j \leq m_\sigma$ and  $1\leq h\leq d_\sigma$ then
\begin{equation}\label{stellap16}
L^*_{\sigma,j}w^\sigma_h = \left\{\begin{array}{ll}
v_\ell  & \mbox{if \  $w_h^\sigma = L_{\sigma,j}v_\ell$ \  for some \  $\ell \in \{1,2, \ldots, d_\theta\}$}\\
0 & \mbox{otherwise.}
\end{array}
\right.
\end{equation}}
\item{
\begin{equation}\label{stellap162}\sum_{k \in K}\psi(k)\sigma(k)w_i^\sigma = 
\left\{\begin{array}{ll}
\frac{|K|}{d_\theta}w_i^\sigma &  \mbox{ if } \sigma \in J, \  1 \leq i \leq m_\sigma\\
0 & \mbox{ otherwise.}
\end{array}
\right.
\end{equation}}
\end{enumerate}
\end{lemma}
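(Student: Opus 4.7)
For part (1), the plan is to combine the orthonormality of $\{L_{\sigma,1},\dotsc,L_{\sigma,m_\sigma}\}$ in $\Hom_K(V,\Res_K^GW_\sigma)$ with Schur's lemma. By Corollary \ref{corollario2} together with Lemma \ref{lemma1} applied to the $V$-isotypic component of $\Res_K^GW_\sigma$, we have $L^*_{\sigma,j}L_{\sigma,i}=\delta_{i,j}I_V$. Thus if $w_h^\sigma=L_{\sigma,i}v_\ell$ we immediately get $L^*_{\sigma,j}w_h^\sigma=\delta_{i,j}v_\ell$, which gives $v_\ell$ exactly when $i=j$ and zero otherwise. Every remaining basis vector is of the form $w_h^\sigma=M_{\rho,i}u^\rho_\ell$ with $\rho\not\cong\theta$, and then $L^*_{\sigma,j}M_{\rho,i}\in\Hom_K(U_\rho,V)=\{0\}$ by Schur. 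This exhausts \eqref{stellap16}.

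For part (2), I would pull the $K$-intertwiner out of the sum and then reduce each case to the orthogonality relations for matrix coefficients. Each basis vector has the form $w_i^\sigma=Nu$, where $N$ is one of the $K$-intertwiners $L_{\sigma,j}$ or $M_{\rho,j}$ defining the adapted basis and $u$ is a $v_\ell$ or $u^\rho_\ell$; letting $\tau$ denote the $K$-action on the domain of $N$, the identity $\sigma(k)N=N\tau(k)$ yields
\[
\sum_{k\in K}\psi(k)\sigma(k)w_i^\sigma=N\Bigl(\sum_{k\in K}\psi(k)\tau(k)u\Bigr).
\]
Substituting $\psi(k)=\frac{d_\theta}{|K|}\overline{u^\theta_{1,1}(k)}$ and pairing the inner sum against an arbitrary basis vector of the domain of $N$, one obtains a multiple of $\langle u^\tau_{p,q},u^\theta_{1,1}\rangle$. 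By \eqref{ORT}, this vanishes unless $\tau\cong\theta$ and $p=q=1$, i.e.\ unless $u=v$ and $N=L_{\sigma,i}$; equivalently, unless $\sigma\in J$, $1\leq i\leq m_\sigma$, and $w_i^\sigma=L_{\sigma,i}v$. In that surviving case the inner sum evaluates to the desired scalar multiple of $v$ (directly by Lemma \ref{lemmaD5} with $u=v$, after the substitution $k\mapsto k^{-1}$ in \eqref{eD5}), producing the right-hand side of \eqref{stellap162}. In every other case the sum is zero.

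The argument is conceptually light: it rests only on Schur's lemma, Lemma \ref{lemma1}, the orthogonality relations \eqref{ORT}, and the projection identity \eqref{eD5}. The main obstacle is purely organizational: the adapted basis $\{w_h^\sigma\}$ is built through several arbitrary arrangements, and one must carefully track, for each $w_h^\sigma$, which $K$-irreducible summand of $\Res_K^GW_\sigma$ it lives in and which coordinate within that summand it occupies, in order to select the correct case of \eqref{ORT} and recover the stated dichotomy.
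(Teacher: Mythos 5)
Your argument follows the paper's proof essentially verbatim: part (1) is \eqref{orthdeltaij} together with Schur's lemma on the non-$\theta$-isotypic pieces, and part (2) reduces, via the $K$-intertwiner defining the adapted basis, to the orthogonality relations \eqref{ORT} in $L(K)$ and to the projection formula \eqref{eD5} in the surviving case. The paper pairs the full sum against a basis vector $w^\sigma_\ell$ of $W_\sigma$, while you factor through $N$ and pair the inner sum against a basis of its domain; this is the same bookkeeping.

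One caveat about the constant. The coefficient $\tfrac{|K|}{d_\theta}$ in \eqref{stellap162} is a misprint, and your phrase ``producing the right-hand side of \eqref{stellap162}'' quietly reproduces it. Since $\psi(k)=\tfrac{d_\theta}{|K|}\langle v,\theta(k)v\rangle$ already carries the prefactor $\tfrac{d_\theta}{|K|}$, the application of \eqref{eD5} (with $u=v$, after $k\mapsto k^{-1}$) gives
\[
\sum_{k\in K}\psi(k)\theta(k)v=\frac{d_\theta}{|K|}\cdot\frac{|K|}{d_\theta}\,v=v,
\]
so in the nonzero case the sum equals $w_i^\sigma$, not $\tfrac{|K|}{d_\theta}w_i^\sigma$. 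This is forced by consistency: $\psi*\psi=\psi$ makes $\sigma(\psi)=\sum_{k\in K}\psi(k)\sigma(k)$ an idempotent, so its eigenvalues lie in $\{0,1\}$; and the proof of Lemma \ref{lD15}, which substitutes \eqref{stellap162} into $\left\langle\sigma(g^{-1})w_i^\sigma,\sum_{k\in K}\psi(k)\sigma(k)w_j^\sigma\right\rangle$ and concludes $\overline{u^\sigma_{i,j}}(g)$, works only with the constant $1$. Keep track of that $\tfrac{d_\theta}{|K|}$ prefactor, and the rest of your outline is sound.
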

\begin{proof}
(1) This is a consequence of  \eqref{orthdeltaij}  and the definition of the vectors $w_h^\sigma$.
(2) First of all, note that 
\[
\sum_{k \in K}\psi(k)\sigma(k)w_i^\sigma = \frac{d_\theta}{|K|}\sum_{k \in K}\langle \theta(k) v, v\rangle\sigma(k^{-1})w_i^\sigma.
\]
 If $\sigma\in J$ and $1 \leq i \leq m_\sigma$, then we may apply \eqref{eD5} since 
$\sigma(k^{-1 })w_i^\sigma = \sigma(k^{-1})L_{\sigma,i}v = L_{\sigma,i}\theta(k^{-1})v$.
Otherwise, we can argue as in the proof of \eqref{eD5}: the bases are adapted to the choice of $v$ and to the decomposition of $\Res_K^G W_\sigma$ and therefore  we may use the orthogonality relations for the matrix coefficients in $L(K)$. For instance, if  $\sigma \in J$ and $m_\sigma< i\leq m_\sigma d_\theta$ then $w_i^\sigma = L_{\sigma,h}v_j$ for some $1\leq h \leq m_\sigma$, $2\leq j \leq d_\theta$, and therefore, for all $\ell = 1,2, \ldots, d_\sigma$
\[
\begin{split}
\left\langle \sum_{k \in K}\langle \theta(k)v,v\rangle\sigma(k^{-1})L_{\sigma,h}v_j,w_{\ell}^\sigma\right\rangle & =  \sum_{k \in K}\langle \theta(k)v,v\rangle\langle \theta(k^{-1})v_j, L^*_{\sigma,h}w_\ell^\sigma\rangle\\
&= 0
\end{split}
\]
because if  $ L^*_{\sigma,h}w_\ell^\sigma\neq 0$  by \eqref{stellap16} it is equal   to one of the $v_1, v_2, \ldots, v_{d_\theta}$ and $j \geq 2$. 
In the other cases we are dealing with  matrix coefficients corresponding to inequivalent  $K$-representations.

\end{proof}

In what follows, we denote by $u_{i,j}^\sigma$ the matrix coefficients of $\sigma$  corresponding to the bases chosen above,
that is,
\[
u_{i,j}^\sigma(g) = \langle \sigma(g)w_j^\sigma, w_i^\sigma\rangle
\]
for $\sigma\in  \widehat{G}$, $ i,j = 1,2, \ldots, d_\sigma$, $g \in G$.  Then we define the convolution operators
($\sigma \in J$, $i,j = 1, 2, \ldots, m_\sigma$):
\[
\widetilde{U}^\sigma_{i,j} f = \frac{d_\sigma}{|G|}f*\overline{u^\sigma_{j,i}}
\]
for all $f \in L(G)$. We want to show that $\widetilde{U}^\sigma_{i,j}$ corresponds to $U_{i,j}^\sigma$ in \eqref{quad11} under the isometry $T_v$.

\begin{theorem}\label{tD12}
For all $\sigma \in J$,  $i,j = 1, 2, \ldots, m_\sigma$ and $f \in \Ind_K^GV$ we have:
\[
T_vU^\sigma_{i,j} f = \widetilde{U}^\sigma_{i,j}T_vf,
\]
that is, the following diagram is commutative:
 \[
\begin{array}{ccc}
\Ind_K^G V&\stackrel{T_v}{\longrightarrow}
&T_v[\Ind_K^G V]\\
U_{i,j}^\sigma\downarrow &   &  \downarrow \widetilde{U}^\sigma_{i,j}  \\
\Ind_K^G V&\stackrel{T_v}{\longrightarrow}
&T_v[\Ind_K^G V].\\
\end{array}
\]
\end{theorem}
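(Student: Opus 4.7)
The plan is to verify the identity $T_v U^\sigma_{i,j} f(g) = \widetilde{U}^\sigma_{i,j} T_v f(g)$ by a direct pointwise computation for arbitrary $g \in G$ and $f \in \Ind_K^G V$, with Lemma \ref{lemmaD5} serving as the bridge between the two sides. Both operators live in $\Hom_G(\Ind_K^G V, L(G))$ ($\widetilde U^\sigma_{i,j}$ is right-convolution, hence commutes with $\lambda_G$; $T_v U^\sigma_{i,j}$ intertwines $\lambda$ with $\lambda_G$), so in principle $G$-equivariance reduces matters to evaluation at $g = 1_G$, but this gain is modest and the direct pointwise computation is already transparent.

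For the left-hand side I would unfold $U^\sigma_{i,j} = \frac{d_\sigma}{d_\theta}(\overset{\diamond}{L_{\sigma,i}})^*\overset{\diamond}{L_{\sigma,j}}$ by rewriting $(\overset{\diamond}{L_{\sigma,i}})^* = (L_{\sigma,i}^*)^\vee$ via \eqref{pastrocchio}, then applying the explicit formulas for $\vee$ and $\diamond$ together with \eqref{definizione stellata3} to get
\[
(U^\sigma_{i,j} f)(g) = \frac{d_\sigma}{d_\theta\,|G|}\sum_{g_0 \in G} L_{\sigma,i}^*\,\sigma(g^{-1}g_0)\,L_{\sigma,j} f(g_0).
\]
Splitting $g_0 = tk$ with $t \in \TT$, $k \in K$, the covariance $f(tk) = \theta(k^{-1})f(t)$ combined with $K$-equivariance of $L_{\sigma,j}$ makes the $\sigma(k)$ factors cancel in $\sigma(g^{-1}tk)\sigma(k^{-1})$, collapsing the $k$-sum to a factor of $|K|$. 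Applying $T_v$ and using $w_i^\sigma = L_{\sigma,i}v$ yields
\[
T_v U^\sigma_{i,j} f(g) = \frac{d_\sigma}{|G|}\sqrt{\frac{|K|}{d_\theta}}\sum_{t \in \TT}\langle \sigma(g^{-1}t)L_{\sigma,j} f(t),\, w_i^\sigma\rangle.
\]

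For the right-hand side I would expand the convolution and substitute $g_1 = gg_0^{-1}$, using
\[
\overline{u^\sigma_{j,i}(g_1^{-1}g)} = \langle \sigma(g^{-1}g_1)w_j^\sigma,\, w_i^\sigma\rangle
\]
and the definition of $T_v$, so that
\[
\widetilde U^\sigma_{i,j} T_v f(g) = \frac{d_\sigma}{|G|}\sqrt{\frac{d_\theta}{|K|}}\sum_{g_1 \in G}\langle f(g_1),v\rangle\,\langle \sigma(g^{-1}g_1)w_j^\sigma,\,w_i^\sigma\rangle.
\]
Splitting $g_1 = tk$ and again using the intertwining properties, the inner $k$-sum becomes $\sum_{k\in K}\langle f(t),\theta(k)v\rangle\,\theta(k)v$ inside the scalar product (after pulling $\sigma(g^{-1}t)L_{\sigma,j}$ out). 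Rewriting $\langle f(t),\theta(k)v\rangle = \langle \theta(k^{-1})f(t), v\rangle$ by unitarity of $\theta$ and substituting $k \mapsto k^{-1}$ puts this sum into the exact form of the projection formula \eqref{eD5} with $u = f(t)$, yielding $\frac{|K|}{d_\theta}f(t)$. The remaining constants combine as $\sqrt{d_\theta/|K|}\cdot(|K|/d_\theta) = \sqrt{|K|/d_\theta}$, reproducing the LHS verbatim.

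The only real obstacle is bookkeeping: getting the normalization constants from $\overset{\diamond}{L}$, $\vee$, $T_v$, and the convolution to balance correctly, and correctly manipulating the inner $K$-sum so that it matches the asymmetric form of \eqref{eD5} (the key step being the substitution $k \leftrightarrow k^{-1}$ after using unitarity).
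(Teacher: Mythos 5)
Your proof is correct, and it takes a genuinely cleaner route than the paper's. The paper transforms the left side into the right side by expanding $\langle L_{\sigma,j}f(g_1),\sigma(g_1^{-1}g)w_i^\sigma\rangle$ in the full adapted basis $\{w_h^\sigma\}$, invoking \eqref{stellap16} to kill all terms except those of the form $L_{\sigma,j}v_\ell$, and then applying \eqref{eD5} twice — once \emph{forward} to introduce a $k$-sum, and once \emph{backward} to collapse it. You instead compute each side separately down to a coset sum and match them. On the left you notice the cancellation $\sigma(k)\sigma(k^{-1})$ when $g_0 = tk$, which collapses the $k$-sum to a factor $|K|$ without any projection lemma; on the right you reduce the $k$-sum to a single application of \eqref{eD5} (after the $k\leftrightarrow k^{-1}$ swap). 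This avoids \eqref{stellap16} and the adapted basis entirely — you only use $w_i^\sigma = L_{\sigma,i}v$ and $w_j^\sigma = L_{\sigma,j}v$, which is all the statement actually depends on — and it makes transparent that both sides equal
\[
\frac{d_\sigma}{|G|}\sqrt{\frac{|K|}{d_\theta}}\sum_{t\in\TT}\bigl\langle\sigma(g^{-1}t)L_{\sigma,j}f(t),\,w_i^\sigma\bigr\rangle.
\]
What the paper's approach buys is that it works entirely inside the $g_1$-sum over $G$ and in the process exhibits the role of the adapted basis, which fits the thematic thrust of Section \ref{SecHecke}; what your approach buys is economy (one use of \eqref{eD5} instead of two, no basis expansion) and a clearly displayed common intermediate form. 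Both proofs are complete and correct.
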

\begin{proof}
For all $g \in G$ we have:
\[
\begin{split}
[T_vU^\sigma_{i,j} f](g) & = \frac{d_\sigma}{\sqrt{|K|d_\theta}}\langle[(L^*_{\sigma,i})^\vee\overset{\diamond}{L}_{\sigma,j}f](g), v\rangle\\
& =  \frac{d_\sigma}{|G|\sqrt{|K|d_\theta}}\sum_{g_1 \in G}
\langle L^*_{\sigma,i}\sigma(g^{-1}g_1)L_{\sigma,j}f(g_1),  v\rangle\\
& =  \frac{d_\sigma}{|G|\sqrt{|K|d_\theta}}\sum_{g_1 \in G}\langle L_{\sigma,j}f(g_1), \sigma(g_1^{-1}g) w_i^\sigma\rangle\\
& =   \frac{d_\sigma}{|G|\sqrt{|K|d_\theta}}\sum_{g_1 \in G}\sum_{h = 1}^{d_\sigma}\langle f(g_1), L^*_{\sigma,j}w_h^\sigma\rangle \overline{\langle\sigma(g_1^{-1}g)w_i^\sigma, w_h^\sigma\rangle}\\
\mbox{(by  \eqref{stellap16})} \ \ \ \  & = \frac{d_\sigma}{|G|\sqrt{|K|d_\theta}}\sum_{g_1 \in G}\sum_{\ell = 1}^{d_\theta}\langle f(g_1), v_\ell\rangle\overline{ \langle \sigma(g_1^{-1}g)w_i^\sigma, L_{\sigma, j}v_\ell\rangle}\\
\mbox{(by  \eqref{eD5} with $u = v_\ell$)} \ \ \ \  & = \frac{d_\sigma}{|G|}\frac{\sqrt{d_\theta}}{|K|^{3/2}}\sum_{\ell = 1}^{d_\theta}\sum_{g_1 \in G}\sum_{k \in K}\langle f(g_1), \theta(k^{-1}) v\rangle \cdot \\
& \ \ \ \ \ \cdot  \langle v, \theta(k)v_\ell\rangle
\overline{\langle\sigma(g_1^{-1}g)w_i^\sigma, L_{\sigma, j}v_\ell\rangle}\\
\mbox{($g_1 = g_2k$)}  \ \ \ \  & = \frac{d_\sigma}{|G|}\frac{\sqrt{d_\theta}}{|K|^{3/2}}\sum_{\ell = 1}^{d_\theta}\sum_{g_2 \in G}\langle f(g_2), v\rangle\cdot \\
& \ \ \ \ \ \ \ \cdot  \overline{\left\langle \sigma(g_2^{-1}g)w_i^\sigma, L_{\sigma,j}\sum_{k \in K} \langle \theta(k^{-1}) v, v_\ell\rangle \theta(k)v_\ell\right\rangle}   \\
   & =   \frac{d_\sigma\sqrt{d_\theta}}{|G|\sqrt{|K|}}\sum_{g_2 \in G}\langle f(g_2), v\rangle \overline{\langle \sigma(g_2^{-1}g)w_i^\sigma, w_j^\sigma\rangle}\\
   & \mbox{(by  \eqref{eD5} with $u , v$ replaced by $v, v_\ell$)}\\
  & =   \frac{d_\sigma \sqrt{d_\theta}}{|G|\sqrt{|K|}}\sum_{g_2 \in G}\langle f(g_2), v\rangle \overline{u^\sigma_{j,i}(g_2^{-1}g)}\\
& = [\widetilde{U}_{i,j}T_vf](g).
\end{split}
\]
\end{proof}
The following lemma shows that the only matrix coefficients $u_{i,j}^\sigma$ in  $\mathcal{H}(G,K,\psi)$ are the conjugate of those that come from the $V$-isotypic component of $W_\sigma$.
\begin{lemma}\label{lD15}
We have 
\[
\psi*\overline{u_{i,j}^\sigma}* \psi = \left\{
\begin{array}{ll}
\overline{u^\sigma_{i,j}} & \mbox{ if $\sigma \in J$ and $1 \leq i,j \leq m_\sigma$}\\
0 & \mbox{otherwise.}
\end{array}
\right.
\]
\end{lemma}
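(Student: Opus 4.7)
My plan is to evaluate $(\psi * \overline{u^\sigma_{i,j}} * \psi)(g)$ pointwise and recognize the answer through \eqref{stellap162}. First I will unroll the two convolutions; because $\psi$ is supported on $K$, this gives
\[
(\psi * \overline{u^\sigma_{i,j}} * \psi)(g) = \sum_{k_1, k_2 \in K} \psi(k_1)\psi(k_2)\,\overline{u^\sigma_{i,j}(k_1^{-1} g k_2^{-1})}.
\]
Using $\overline{u^\sigma_{i,j}(h)} = \langle w_i^\sigma, \sigma(h) w_j^\sigma\rangle$ together with unitarity of $\sigma$, each summand becomes $\psi(k_1)\psi(k_2)\,\langle \sigma(k_1)w_i^\sigma,\, \sigma(g)\sigma(k_2^{-1})w_j^\sigma\rangle$.

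Next I will pull both sums inside the inner product. The scalar $\psi(k_1)$ factors out linearly through the first slot. For the second slot I use conjugate-linearity, which turns $\psi(k_2)$ into $\overline{\psi(k_2)} = \psi(k_2^{-1})$; after the change of variables $k_2 \mapsto k_2^{-1}$, the operator $\sigma(k_2^{-1})$ becomes $\sigma(k_2)$. Setting $P_\sigma := \sum_{k \in K} \psi(k)\sigma(k) \in \mathrm{End}(W_\sigma)$, this yields the clean formula
\[
(\psi * \overline{u^\sigma_{i,j}} * \psi)(g) = \langle P_\sigma w_i^\sigma,\, \sigma(g)\, P_\sigma w_j^\sigma\rangle.
\]

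To finish, I invoke \eqref{stellap162}: the operator $P_\sigma$ sends each $w_\ell^\sigma$ with $\sigma \in J$ and $1 \leq \ell \leq m_\sigma$ to a fixed scalar multiple of itself, and annihilates every other basis vector (in particular it is the zero operator whenever $\sigma \notin J$); moreover, the idempotence $\psi * \psi = \psi$ forces $P_\sigma^2 = P_\sigma$, so that scalar is $1$. Consequently, if $\sigma \notin J$ or $\max(i, j) > m_\sigma$, then at least one of $P_\sigma w_i^\sigma,\, P_\sigma w_j^\sigma$ vanishes and the whole expression is zero; and if $\sigma \in J$ with $1 \leq i, j \leq m_\sigma$, the expression reduces to $\langle w_i^\sigma, \sigma(g) w_j^\sigma\rangle = \overline{u^\sigma_{i,j}(g)}$, as desired. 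The main bookkeeping obstacle is the careful handling of conjugate-linearity in the second slot of the inner product, together with the use of $\overline{\psi(k)} = \psi(k^{-1})$ and the substitution $k \mapsto k^{-1}$ needed to reshape the inner sum into the form of $P_\sigma$; once that is done, the identification of $\psi * \overline{u^\sigma_{i,j}} * \psi$ with either $\overline{u^\sigma_{i,j}}$ or $0$ is immediate from Lemma \ref{lD10}.
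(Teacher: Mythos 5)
Your proof is correct and follows essentially the same route as the paper: both reduce the calculation to the action of the projection $P_\sigma = \sum_{k\in K}\psi(k)\sigma(k)$ on the adapted basis vectors $w_\ell^\sigma$ via \eqref{stellap162}, your unified identity $(\psi*\overline{u^\sigma_{i,j}}*\psi)(g)=\langle P_\sigma w_i^\sigma,\sigma(g)P_\sigma w_j^\sigma\rangle$ merely handling both convolutions at once where the paper does them one at a time. Your observation that $\psi*\psi=\psi$ forces $P_\sigma^2=P_\sigma$, hence eigenvalue $1$ on each $w_\ell^\sigma$, is a useful sanity check that in fact corrects the constant $\frac{|K|}{d_\theta}$ printed in \eqref{stellap162}, which (as the proof of Lemma \ref{lD10} itself shows, since $\frac{d_\theta}{|K|}\cdot\frac{|K|}{d_\theta}=1$) ought to read $1$.
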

\begin{proof}

We have, for all $g \in G$, 
\begin{equation}\label{stellap18}
\begin{split}
\overline{u^\sigma_{i, j}}* \psi(g) &  = \sum_{k \in K}u_{j,i}^\sigma(kg^{-1})\psi(k)\\
\mbox{(by definition of  $u_{i,j}^\sigma$)} \ \ \ \ \  &  =\left \langle \sigma(g^{-1})w_i^\sigma, \sum_{k \in K}\psi(k) \sigma(k) w_j^\sigma\right\rangle\\
\mbox{(by \eqref{stellap162})} \ \ \ \ \  &  =
 \left\{
\begin{array}{ll}
\overline{u^\sigma_{i,j}}(g) & \mbox{ if $\sigma \in J$ and $1 \leq j \leq m_\sigma$}\\
0 & \mbox{otherwise.}
\end{array}
\right.
\end{split}
\end{equation}
The reader can complete the proof by computing in a similar way $\psi* \overline{u^\sigma_{i,j}}$.

\end{proof}

By virtue of Theorem  \ref{tD12} and Lemma \ref{lD15} it is convenient to set:
\[
\phi^\sigma_{i,j} = \overline{u^\sigma_{i,j}}
\]
for $\sigma \in J$, $i,j = 1,2, \ldots, m_\sigma$. In other words,
$$\phi^\sigma_{i,j}(g) = \langle w_i^\sigma, \sigma(g)w_j^\sigma\rangle$$ for all  $g \in G$.
Compare with Definition 9.4.5. of \cite{book}  and Definition 2.10 of \cite{st1} (see also \cite{ Macdonald, Terras}).

If $f \in \mathcal{H}(G,K,\psi)$, its  {\it Fourier transform} at $\sigma \in J$ is the $m_\sigma \times m_\sigma$ complex matrix whose
$i,j$-entry is $$[\mathcal{F}(\sigma)]_{i,j} = \langle f, \phi_{i,j}^\sigma \rangle_{L(G)}$$ for $\sigma \in J$ and $i,j = 1,2, \ldots, m_\sigma$. As in \eqref{2quad11}, we set $\mathcal{F} f= \bigoplus_{\sigma \in J}\mathcal{F}f(\sigma)$.
We denote by $\chi^\sigma$ the character of the $G$-irreducible  representation  $(\sigma, W_\sigma)$. Moreover, 
$\chi^\sigma(f) = \sum_{g \in G}\chi^\sigma(g)f(g)$  for all $f \in L(G)$.  

\begin{theorem}\label{tD17}
\begin{enumerate}
\item{The set $\{\phi^\sigma_{i,j}: \sigma \in J, i,j = 1,2, \ldots, m_\sigma\}$ is an orthogonal basis for 
$\mathcal{H}(G,K,\psi)$  and $\|\phi^\sigma_{i,j}\|^2_{L(G)} = \frac{|G|}{d_\sigma}$.  In particular, the Fourier inversion formula is
\[
f = \frac{1}{|G|}\sum_{\sigma\in J}d_\sigma\sum_ {i,j = 1}^{m_\sigma}\left[\mathcal{F}(\sigma)\right]_{i,j}\phi^\sigma_{i,j}.
\]}
\item{The map
\[
\begin{array}{ccc}
 \mathcal{H}(G,K,\psi) & \longrightarrow & \bigoplus_{\sigma \in J}M_{m_\sigma, m_\sigma}(\CC)\\
f& \longmapsto & \mathcal{F}f
\end{array}
\]
is an isomorphism of algebras.}
\item{Set $$\phi^\sigma = \sum_{i= 1}^{m_\sigma}\phi_{i,i}^\sigma$$  and suppose  that 
$\varphi^\sigma$ is the irreducible  character of $ \mathcal{H}(G,K,\psi)$ corresponding to $M_{m_\sigma, m_\sigma}(\CC)$. Then 
\begin{equation}\label{stellap19}
\varphi^\sigma(f) = \chi^\sigma(f) = \sum_{g \in G}f(g) \overline{\phi^\sigma(g)},
\end{equation}
for all $f \in  \mathcal{H}(G,K,\psi)$.
Moreover, the  $\phi^\sigma$'s satisfy the following orthogonality relations:
\begin{equation}\label{ORT2}
\langle \phi^\sigma, \phi^\rho\rangle = \delta_{\sigma,\rho}\frac{|G|m_\sigma}{d_\sigma}.
\end{equation}}
\end{enumerate}
\end{theorem}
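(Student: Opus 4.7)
My plan is to dispatch the three items in order, using the orthogonality of Part~(1) as the engine for both the algebra isomorphism of Part~(2) and the character identity of Part~(3).

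For Part~(1), I would observe that $\phi^\sigma_{i,j}=\overline{u^\sigma_{i,j}}$ and that $\langle\bar f_1,\bar f_2\rangle_{L(G)}=\overline{\langle f_1,f_2\rangle_{L(G)}}$, so the matrix coefficient orthogonality \eqref{ORT} transfers verbatim to
\[
\langle\phi^\sigma_{i,j},\phi^\rho_{h,k}\rangle=\frac{|G|}{d_\sigma}\,\delta_{\sigma,\rho}\delta_{i,h}\delta_{j,k},
\]
giving both mutual orthogonality and the squared norm $|G|/d_\sigma$. Lemma \ref{lD15} places every $\phi^\sigma_{i,j}$ with $\sigma\in J$ and $1\le i,j\le m_\sigma$ inside $\mathcal{H}(G,K,\psi)$. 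These are $\sum_{\sigma\in J}m_\sigma^2$ nonzero orthogonal vectors, matching $\dim\mathcal{H}(G,K,\psi)=\dim\Hom_G(\Ind_K^G V,\Ind_K^G V)$ (using the antiisomorphism recalled before the theorem together with \eqref{isomorphism}); hence the family is a basis, and the inversion formula is the standard expansion $f=\sum\|\phi^\sigma_{i,j}\|^{-2}\langle f,\phi^\sigma_{i,j}\rangle\phi^\sigma_{i,j}$.

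For Part~(2), the cleanest route is a direct computation. Since convolution commutes with complex conjugation, the convolution property \eqref{CON} gives
\[
\phi^\sigma_{i,j}*\phi^\rho_{h,k}=\frac{|G|}{d_\sigma}\,\delta_{\sigma,\rho}\delta_{j,h}\,\phi^\sigma_{i,k}.
\]
Part~(1) identifies $\mathcal{F}(\phi^\sigma_{i,j})$ as $(|G|/d_\sigma)$ times the matrix unit $E^\sigma_{i,j}$ on the $\sigma$-block and zero elsewhere; a one-line comparison of matrix-unit products with $\mathcal{F}$ of the convolution above confirms that $\mathcal{F}$ is multiplicative on the basis, hence an algebra homomorphism. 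Injectivity follows from Part~(1) and surjectivity from equality of dimensions. As an alternative I would mention that one can transport the isomorphism of Theorem \ref{teorema5} along the isometry $T_v$ of Proposition \ref{pD2}, using Theorem \ref{tD12}.

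For Part~(3), the description of the irreducible characters of $\bigoplus_{\sigma\in J}M_{m_\sigma,m_\sigma}(\CC)$ gives $\varphi^\sigma(f)=\tr(\mathcal{F}f(\sigma))=\sum_{i=1}^{m_\sigma}\langle f,\phi^\sigma_{i,i}\rangle=\langle f,\phi^\sigma\rangle$, which is the rightmost equality in \eqref{stellap19}. To identify this with $\chi^\sigma(f)$, I would expand $\chi^\sigma(g)=\sum_{i=1}^{d_\sigma}u^\sigma_{i,i}(g)=\sum_{i=1}^{d_\sigma}\overline{\phi^\sigma_{i,i}(g)}$, obtaining $\chi^\sigma(f)=\sum_{i=1}^{d_\sigma}\langle f,\phi^\sigma_{i,i}\rangle$, and then show that the indices $\sigma\notin J$ and $i>m_\sigma$ contribute zero. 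For this I use $f=\psi*f*\psi$, the self-adjointness of $\psi$ (from $\overline{\psi(g^{-1})}=\psi(g)$, noted in the proof of Proposition \ref{pD2}), and the standard convolution adjoint identity $\langle\psi*h*\psi,c\rangle=\langle h,\psi*c*\psi\rangle$ to rewrite $\langle f,\phi^\sigma_{i,i}\rangle=\langle f,\psi*\phi^\sigma_{i,i}*\psi\rangle$; Lemma \ref{lD15} then annihilates this inner product exactly when $\sigma\notin J$ or $i>m_\sigma$, leaving precisely the $\varphi^\sigma$ expression. The orthogonality \eqref{ORT2} is then immediate from Part~(1) by summation along the diagonal: $\langle\phi^\sigma,\phi^\rho\rangle=\delta_{\sigma,\rho}\sum_{i=1}^{m_\sigma}|G|/d_\sigma=\delta_{\sigma,\rho}\,m_\sigma|G|/d_\sigma$. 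The main technical obstacle is this vanishing step, where one must carefully combine the characterization $f=\psi*f*\psi$, the self-adjointness of $\psi$, and Lemma \ref{lD15}; everything else is bookkeeping against \eqref{ORT}, \eqref{CON}, and Theorem \ref{teorema5}.
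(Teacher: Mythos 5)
Your proof is correct and follows essentially the same route as the paper: Part~(1) from Lemma~\ref{lD15} and the orthogonality relations~\eqref{ORT}, Part~(2) from the convolution identity~\eqref{CON} and a dimension count, and Part~(3) by reading off the block trace and observing that the terms with $i>m_\sigma$ (or $\sigma\notin J$) drop out. The one place you supply more detail than the paper is the vanishing $\langle f,\overline{u^\sigma_{i,i}}\rangle=0$ for $i>m_\sigma$, which the paper merely asserts; your argument via $f=\psi*f*\psi$, the self-adjointness of $\psi$, and Lemma~\ref{lD15} is exactly the intended justification.
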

\begin{proof}
(1) It follows  from Lemma \ref{lD15} and the usual orthogonality relations for matrix coefficients in \eqref{ORT}.

(2) The usual convolution properties of the matrix coefficients \eqref{CON}  yields
\[
\phi^\sigma_{i,j}*\phi^\rho_{h,\ell} = \frac{|G|}{d_\sigma} \delta_{\sigma, \rho}\delta_{j,h}\phi^\sigma_{i,\ell}
\]
and this, combined with the Fourier inversion formula, implies  that the Fourier transform is an isomorphism.

(3) For  all $f  \in  \mathcal{H}(G,K,\psi)$ and $\sigma \in J$, we have 
\[
\begin{split}
\varphi^\sigma(f)  & = \sum_{i= 1}^{m_\sigma}\left[\mathcal{F} f(\sigma)\right]_{i,i}
 =  \sum_{i= 1}^{m_\sigma}\langle f, \phi^\sigma_{i,i}\rangle
  = \sum_{g \in G}f(g)\overline{\phi^\sigma(g)}\\
& =  \sum_{i= 1}^{m_\sigma}\sum_{g \in G}f(g) u_{i,i}^\sigma(g) = \chi^\sigma(f),
\end{split}
\]
where the last equality follows from the fact that $\langle f,  u^\sigma_{i,i}\rangle = 0$ if $i>m_\sigma$.
The proof of \eqref{ORT2} is obvious.
\end{proof}
\begin{remark}{\rm If $\phi \in L(G)$ the convolution operator $T_\phi:L(G) \to L(G)$ associated with  $\phi$ is defined by setting $T_\phi f = f*\psi$  for all $f \in  L(G)$.  Since $T_{\phi_1 * \phi_2} = T_{\phi_2}T_{\phi_1}$, 
the map $\phi \mapsto T_\phi$ is an {\it antiisomorphism}  between $L(G)$ and $\Hom_G(L(G),L(G))$,  see 
Exercise 4.2.2.  in \cite{book}. It folllows that the map $U_{i,j}^\sigma \mapsto \widetilde{U}_{i,j}^\sigma$ in Theorem \ref{tD12} yields the {\it antiisomorphism}   $U^\sigma_{i,j} \mapsto \phi_{j,i}^\sigma$  between $\Hom_G(\Ind_K^G V, \Ind_K^G V)$ and $\mathcal{H}(G,K,\psi)$ (see also \cite{CURFOS,CR2, st4}). Actually, these algebras are isomorphic, because they are both isomorphic to 
$\bigoplus_{\sigma \in J}M_{m_\sigma,m_\sigma}(\CC)$.
Moreover, all other results in Theorem \ref{teorema5} may be translated in the present setting.  For instance, if we define 
$\widetilde{T}_{\sigma,i}:W_\sigma \to L(G)$ by setting
\[
(\widetilde{T}_{\sigma,i} w)(g) = \sqrt{\frac{d_\sigma}{|G|}}\langle w, \sigma(g)w^\sigma_i\rangle
\]
for all $w \in W$, $g \in G$, then it is easy to check that $T_vT_{\sigma,i} = \widetilde{T}_{\sigma,i}$ and  
$T_v[\Ind_K^G V] = \bigoplus_{\sigma \in J}\bigoplus_{i = 1}^{m_\sigma} \widetilde{T }_{\sigma,i} W_\sigma$  is an isometric orthogonal decomposition. Moreover, $\widetilde{U}_{i,j}^\sigma$ intertwines $\widetilde{T}_{\sigma,j}W_\sigma$ with $\widetilde{T}_{\sigma,i}W_\sigma$ and $\widetilde{U}_{i,i}^\sigma$ is the orthogonal projection onto $\widetilde{T}_{\sigma,i}W_\sigma$}.
\end{remark}

We now prove some formulas that relate  $\chi^\sigma, \varphi^\sigma$ and $\phi^\sigma$ (see also \eqref{stellap19}).  We recall that $\delta_g$ is the  
Dirac function centered at $g$, that is  $\delta_g(g_0) = \left\{\begin{array}{ll}1 & \mbox{ if  $g = g_0$}\\ 0 & \mbox{otherwise.}
\end{array}\right.$

\begin{theorem}\label{tD23} We have:
\begin{equation}\label{e1D23}
\chi^\sigma(g) = \frac{d_\sigma}{|G|m_\sigma}\sum_{h \in G}\overline{\phi^\sigma(h^{-1}gh)}
\end{equation}
and 
\begin{equation}\label{e2D23}
\phi^\sigma(g) =\overline{\varphi^\sigma(\psi*\delta_g*\psi)}
\end{equation}
for all $\sigma\in J$ and $g \in G$.
\end{theorem}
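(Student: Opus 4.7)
\textbf{Plan for \eqref{e1D23}.} The idea is to reduce to the earlier ``averaged diagonal matrix coefficient'' character formula \eqref{eD21}. By construction (see the paragraphs preceding Lemma \ref{lD10}), each $w_i^\sigma$ with $1\leq i\leq m_\sigma$ is a unit vector in $W_\sigma$, being $L_{\sigma,i}v$ for an isometry $L_{\sigma,i}$. Hence, for every fixed $i\in\{1,\dots,m_\sigma\}$, formula \eqref{eD21} applied with $w=w_i^\sigma$ and diagonal matrix coefficient $u^\sigma_{i,i}(\cdot)=\langle\sigma(\cdot)w_i^\sigma,w_i^\sigma\rangle$ yields
\[
\chi^\sigma(g)=\frac{d_\sigma}{|G|}\sum_{h\in G}u^\sigma_{i,i}(h^{-1}gh).
\]
Averaging these $m_\sigma$ identities and using that $\overline{\phi^\sigma(g')}=\sum_{i=1}^{m_\sigma}u^\sigma_{i,i}(g')$, which is immediate from $\phi^\sigma_{i,i}=\overline{u^\sigma_{i,i}}$, produces \eqref{e1D23}.

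\textbf{Plan for \eqref{e2D23}.} Since $\phi^\sigma\in\mathcal{H}(G,K,\psi)$ by Lemma \ref{lD15}, formula \eqref{stellap19} translates the character $\varphi^\sigma$ on the Hecke algebra into the $L(G)$-scalar product against $\phi^\sigma$, so that
\[
\varphi^\sigma(\psi*\delta_g*\psi)=\langle\psi*\delta_g*\psi,\phi^\sigma\rangle_{L(G)}.
\]
The key step is to shift the two outer $\psi$'s onto the right-hand side of the pairing. Introducing the temporary notation $f^{\ast}(g):=\overline{f(g^{-1})}$, a direct substitution in the group algebra scalar product establishes the two adjunction rules
\[
\langle f_1*f_2,f_3\rangle=\langle f_2,f_1^{\ast}*f_3\rangle\qquad\text{and}\qquad\langle f_1*f_2,f_3\rangle=\langle f_1,f_3*f_2^{\ast}\rangle.
\]
Combining these with the identity $\psi^{\ast}=\psi$, noted inside the proof of Proposition \ref{pD2}, lets us rewrite the pairing as
\[
\langle\psi*\delta_g*\psi,\phi^\sigma\rangle=\langle\delta_g,\psi*\phi^\sigma*\psi\rangle.
\]
Finally, Lemma \ref{lD15} applied to each $\phi^\sigma_{i,i}=\overline{u^\sigma_{i,i}}$ gives $\psi*\phi^\sigma*\psi=\phi^\sigma$, so the right-hand side collapses to $\langle\delta_g,\phi^\sigma\rangle=\overline{\phi^\sigma(g)}$. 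Taking complex conjugates then yields \eqref{e2D23}.

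\textbf{Expected obstacle.} Neither identity is conceptually delicate; both are essentially reorganizations of material already assembled. The only point requiring mild care is verifying the two adjunction rules used above, which amounts to renaming the summation variable $g_0\mapsto g_0^{-1}$; without the remark $\psi^{\ast}=\psi$ one might worry about whether to conjugate or invert the argument of $\psi$, but self-adjointness of $\psi$ makes that issue vanish.
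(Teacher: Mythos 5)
Your proof is correct and rests on exactly the same ingredients as the paper's: formula \eqref{eD21} plus the expansion $\overline{\phi^\sigma}=\sum_{i=1}^{m_\sigma}u^\sigma_{i,i}$ for \eqref{e1D23}, and formula \eqref{stellap19} together with $\psi*\phi^\sigma*\psi=\phi^\sigma$ (Lemma \ref{lD15}) and the self-adjointness $\psi^\ast=\psi$ for \eqref{e2D23}. The only presentational difference is in the second identity: the paper arrives at $\sum_{k_1,k_2\in K}\psi(k_1)\psi(k_2)\overline{\phi^\sigma(k_1gk_2)}$ by explicitly unwinding and reparametrizing the double convolution $(\psi*\delta_g*\psi)(g_1)$, whereas you shift the two copies of $\psi$ across the $L(G)$-pairing via the adjunction rules $\langle f_1*f_2,f_3\rangle=\langle f_2,f_1^\ast*f_3\rangle=\langle f_1,f_3*f_2^\ast\rangle$; both end at the same application of Lemma \ref{lD15}.
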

\begin{proof}
The proof of \eqref{e1D23} is easy: it follows from \eqref{eD21}, taking into account that $\phi^\sigma$ is the sum of the conjugate of $m_\sigma$ diagonal matrix coefficients. 

We now turn to the proof of  \eqref{e2D23}.  Starting from \eqref{stellap19}  we  get:
\[
\begin{split}
\varphi^\sigma(\psi*\delta_g*\psi) & = 
\sum_{g_1 \in G}(\psi * \delta_g*\psi)(g_1)\overline{\phi^\sigma(g_1)}\\
& = \sum_{\substack{g_1 \in G: \\g_1k_2^{-1}g^{-1} \in K}}\sum_{k_2 \in K}\left[
\psi(g_1k_2^{-1}g^{-1})\psi(k_2)\right]\overline{\phi^\sigma(g_1)}\\
\mbox{( setting  $k_1 = g_1k_2^{-1}g^{-1})$}\ \ \ \  & = \sum_{k_1,k_2\in K}\psi(k_1)\psi(k_2)\overline{\phi^\sigma(k_1gk_2)}\\
\mbox{(by Lemma \ref{lD15})} \ \ \ \ \  & = \overline{\phi^\sigma(g)}.
\end{split}
\]
\end{proof}

\begin{remark}
{\rm In \cite{CURFOS} (see also \cite{CR2}) it is proved a  formula  that expresses $\chi^\sigma$ in terms of $\varphi^\sigma$. In our notation it reads:
\begin{equation}\label{eD26}
\chi^\sigma(g)  = \frac{|G|}{|\mathcal{C}(g)|}\varphi^\sigma(\psi* {\bf 1}_{\mathcal{C}(g)}* \psi)
\left[\sum_{h \in G}\varphi^\sigma(\psi*\delta_{h^{-1}}*\psi)\cdot\varphi^\sigma(\psi*\delta_h*\psi)\right]^{-1}
\end{equation}
where $\mathcal{C}(g)$ denotes the conjugacy class of $g \in G$  and $ {\bf 1}_{\mathcal{C}(g)}$ its  characteristic function.
In the research-expository paper \cite{st4} it is showed, among other things,  how to obtain these results  using the techniques  in \cite{CURFOS} and \cite{CR2}.
We now want to deduce  \eqref{eD26} from the results proved in the present paper. 
First note that by \eqref{e2D23}
\begin{equation}\label{e2D26}
\begin{split}
\sum_{h \in G}\varphi^\sigma(\psi*\delta_h*\psi)\varphi^\sigma(\psi*\delta_{h^{-1}}*\psi) & = 
\sum_{h \in G}\overline{\phi^\sigma(h)}\overline{\phi^\sigma(h^{-1})}\\
& = \sum_{h \in G}|\phi^\sigma(h)|^2\\
\mbox{ (by \eqref{ORT})}\ \ \ \ \ & = \frac{m_\sigma |G|}{d_\sigma}.
\end{split}
\end{equation}

Moreover, starting from the equality $ {\bf 1}_{\mathcal{C}(g)} = \frac{|\mathcal{C}(g)|}{|G|} \sum_{h \in G}\delta_{h^{-1}gh}$
we get
\begin{equation}\label{eD27}
\begin{split}
\varphi^\sigma(\psi*{\bf 1}_{\mathcal{C}(g)}*\psi) & =  \frac{|\mathcal{C}(g)|}{|G|}\sum_{h \in G}
\varphi^\sigma(\psi* \delta_{h^{-1}gh}*\psi)\\
\mbox{(by \eqref{e2D23})} \ \ \ \ \ & =   \frac{|\mathcal{C}(g)|}{|G|}\sum_{h \in G}\overline{\phi^\sigma(h^{-1}gh)}\\
\mbox{(by \eqref{e1D23})} \ \ \ \ \ & =    \frac{|\mathcal{C}(g)|m_\sigma}{d_\sigma}\chi^\sigma(g).
\end{split}
\end{equation}
Then \eqref{eD26} follows from \eqref{e2D26} and \eqref{eD27}.}
\end{remark}
The spherical functions of a finite Gelfand pair satisfy  the following  functional identity 
\[
\frac{1}{|K|}\sum_{k\in K}\phi(gkh) = \phi(g)\phi(h)
\]
for all $g,h \in G$ (see Theorem 4.5.3 in \cite{book} and \cite{ Faraut, FN, Macdonald, Terras}).  We give  an analogous identity for the  matrix coefficients $\phi_{i,j}^\sigma$.
\begin{proposition}
\label{pennap1}
For $\sigma \in J$,  $i,j = 1,2, \ldots,m_\sigma$ and $g,h \in G$, we have 
\[
\sum_{k \in K}\phi_{i,j}^\sigma (gkh)\overline{\psi(k)} = \sum_{\ell= 1}^{m_\sigma}\phi_{i,\ell}^\sigma(g)\phi_{\ell,j}^\sigma(h).
\]
\end{proposition}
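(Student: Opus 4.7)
The plan is to expand the left-hand side using the defining formula $\phi_{i,j}^\sigma(g)=\langle w_i^\sigma,\sigma(g)w_j^\sigma\rangle$ and to recognize the operator $E_\sigma:=\sum_{k\in K}\psi(k)\sigma(k)$ appearing inside the inner product as the ``projector'' described by \eqref{stellap162}, which picks out the first $m_\sigma$ vectors $w_1^\sigma,\ldots,w_{m_\sigma}^\sigma$ of the adapted basis of $W_\sigma$ (up to the scalar in \eqref{stellap162}) and annihilates all the others.

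First I would write $\phi_{i,j}^\sigma(gkh)=\langle w_i^\sigma,\sigma(g)\sigma(k)\sigma(h)w_j^\sigma\rangle$ and, using sesquilinearity, move the scalar $\overline{\psi(k)}$ into the right-hand slot of the inner product as a factor $\psi(k)$; summing over $k\in K$ gives
\[
\sum_{k\in K}\phi_{i,j}^\sigma(gkh)\overline{\psi(k)}=\Bigl\langle w_i^\sigma,\,\sigma(g)\,E_\sigma\,\sigma(h)w_j^\sigma\Bigr\rangle.
\]
Next, I would expand $\sigma(h)w_j^\sigma=\sum_{\ell=1}^{d_\sigma}\overline{\phi_{\ell,j}^\sigma(h)}\,w_\ell^\sigma$ in the adapted orthonormal basis (using $\langle\sigma(h)w_j^\sigma,w_\ell^\sigma\rangle=\overline{\phi_{\ell,j}^\sigma(h)}$) and apply \eqref{stellap162} termwise: the contributions with $\ell>m_\sigma$ vanish, and only $\ell=1,\ldots,m_\sigma$ survive inside $E_\sigma\sigma(h)w_j^\sigma$.

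Finally, I would pull the surviving scalar coefficients back out of the right-hand slot of the Hermitian form (so that $\overline{\phi_{\ell,j}^\sigma(h)}$ becomes $\phi_{\ell,j}^\sigma(h)$ after conjugation) and identify $\langle w_i^\sigma,\sigma(g)w_\ell^\sigma\rangle=\phi_{i,\ell}^\sigma(g)$ to land on $\sum_{\ell=1}^{m_\sigma}\phi_{i,\ell}^\sigma(g)\phi_{\ell,j}^\sigma(h)$, the right-hand side.

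There is no deep obstacle here: the entire nontrivial content is carried by \eqref{stellap162} (itself a consequence of the projection identity \eqref{eD5}), and the truncation of the sum at $m_\sigma$ is exactly the statement that $E_\sigma$ sees only the $V$-isotypic piece of $\operatorname{Res}_K^G W_\sigma$ and, within that piece, only the preferred direction selected by the vector $v$. The one place where care is needed is the conjugation bookkeeping: $\overline{\psi(k)}$ entering on the left, $\overline{\phi_{\ell,j}^\sigma(h)}$ appearing from the basis expansion, and both unconjugating correctly as scalars migrate between the two slots of the Hermitian form. Once the antilinearity convention is fixed the identity falls out immediately.
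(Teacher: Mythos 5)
Your argument is essentially the paper's proof, just reorganized. The paper inserts the resolution of the identity with respect to the adapted basis $\{w_\ell^\sigma\}$ between $\sigma(g)$ and $\sigma(kh)$ inside the inner product, and then recognizes the resulting sum over $k$ as the convolution computed in \eqref{stellap18}; you instead package the same information as the operator $E_\sigma=\sum_{k\in K}\psi(k)\sigma(k)$, expand $\sigma(h)w_j^\sigma$ in the adapted basis, and apply $E_\sigma$ termwise. Both routes reduce the proposition to the same projection property of $\psi$, and the algebra is symmetric.

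The one piece of bookkeeping you wave at --- ``up to the scalar in \eqref{stellap162}'' --- is exactly what has to be pinned down. Taken literally, \eqref{stellap162} says $E_\sigma w_i^\sigma=\frac{|K|}{d_\theta}w_i^\sigma$ for $1\le i\le m_\sigma$, and your last line would then carry a spurious factor $|K|/d_\theta$. In fact \eqref{stellap162} contains a misprint: running through its own proof (substitute $k\mapsto k^{-1}$, use $\psi(k^{-1})=\frac{d_\theta}{|K|}\langle\theta(k)v,v\rangle$ and $\sigma(k^{-1})L_{\sigma,i}v=L_{\sigma,i}\theta(k^{-1})v$, then apply \eqref{eD5} with $u=v$) yields $\sum_{k\in K}\psi(k)\sigma(k)w_i^\sigma=w_i^\sigma$, i.e.\ eigenvalue $1$. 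This is the value consistent with $\psi*\psi=\psi$ and with \eqref{stellap18}, which is stated correctly and which the paper's own proof of this proposition cites instead of \eqref{stellap162}. With eigenvalue $1$ in hand, your computation lands on $\sum_{\ell=1}^{m_\sigma}\phi_{i,\ell}^\sigma(g)\phi_{\ell,j}^\sigma(h)$ exactly as required; you should either recompute the constant, cite \eqref{stellap18} directly as the paper does, or note explicitly that the normalization in \eqref{stellap162} is misprinted.
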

\begin{proof}
\[
\begin{split}
\sum_{k \in K}\phi_{i,j}^\sigma(gkh)\overline{\psi(k)}   & = \sum_{k \in K}\langle w_i^\sigma, \sigma(gkh)w_j^\sigma\rangle\overline{\psi(k)} \\
& = \sum_{\ell = 1}^{d_\sigma}\langle \sigma(g^{-1  })w_i^\sigma, w_\ell^\sigma\rangle\sum_{k\in K}\overline{u_{\ell,j}^\sigma(kh)}\overline{\psi(k)} \\
\mbox{(by \eqref{stellap18})}\ \ \ \ \  &= \sum_{\ell= 1}^{m_\sigma}\phi_{i,\ell}^\sigma(g)\phi_{\ell,j}^\sigma(h).
\end{split}
\]
\end{proof}
\begin{remark}{\rm If the $K$-representation $(\theta,V)$ is one dimensional, it can be identified with its character 
$\chi: K \to \{z \in \CC: |z| = 1\}$ which  satisfies the identity
$\chi(k_1k_2) = \chi(k_1)\chi(k_2)$, for all $k_1,k_2 \in K$.  
It follows that 
\[
\mathcal{H}(G,K,\psi) = \{f \in L(G): f(k_1gk_2) = \overline{\chi(k_1)}\overline{\chi(k_2) }f(g), \ \forall k_1,k_2 \in K , \ g \in G\}.
\]
See \cite{st4} for the easy details. If $G = \coprod_{s \in \mathcal{S}}KsK$ is the decomposition of $G$ into double $K$-cosets, then
a function $f \in \mathcal{H}(G,K,\psi)$ is determined by its values on $\mathcal{S}$. In particular, the  orthogonality relations for $\phi_{i,j}^\sigma$ and $\phi^\sigma$ take the form:
\[
\sum_{s \in \mathcal{S}}|KsK|\phi_{i,j}^\sigma(s)\overline{\phi^\rho_{\ell,r}(s)}= \frac{|G|}{d_\sigma}\delta_{\sigma,\rho}\delta_{i, \ell}\delta_{j,r}
\]
and
\[
\sum_{s \in \mathcal{S}}|KsK|\phi^\sigma(s)\overline{\phi^\rho(s)} = \frac{m_\sigma|G|}{d_\sigma}\delta_{\sigma,\rho}.
\]
>From the last formula, it is just an easy exercise to deduce the orthogonality relations in 2, Theorem 4.24 in \cite{st4}, originally 
proved by Curtis and Fossum (see Theorem 2.4  in \cite{CURFOS} or (ii), Theorem 11.32 in \cite{CR2}).
 }
\end{remark}

\section{Gelfand-Tsetlin bases}\label{SecGelTse}
We now extend to our setting the classical theory of Gelfand-Tsetlin bases (cf.  \cite{book2, OV,st2}), that yields a natural choice for the orthonormal basis in Corollary \ref{corollario1}. 
We continue to use the notation of the previous sections (in particular Sections \ref{SecorthFrob} and \ref{Seccommutant}). 
First we prove a preliminary result that examines  the correspondence $L \mapsto (L^*)^\vee$ in relation to the induction in stages.  Let $H$ be a subgroup of $G$ containing $K$ (i.e. $K \leq H\leq G$) and denote by $(\rho,U)$ an irreducible  $H$-representation. 
If $L_1 \in \Hom_K(V, \Res^H_K U)$ and $L_2\in \Hom_H(U,\Res_H^GW)$ then  $L_2L_1 \in \Hom_K(V, \Res_K^GW)$. 
Since $(L_1^*)^\vee \in \Hom_H(U, \Ind_K^HV)$, we can consider $(L_1^*)^\vee U$ as a subspace of $\Ind_K^HV$. Therefore, $\Ind_H^G[(L_1^*)^\vee U]$ is a subspace of 
\begin{equation}\label{indstage}
\Ind_H^G[\Ind _K^HV]\cong  \Ind_K^GV.
\end{equation}
We recall the construction of this isomorphism, which is the transitive property of the induction  (cf. \cite{ Mackey,book2}).
The Left Hand Side
 may be seen as the set of all $F: G\times H \to V$ such that 
$F(gh, h_0k) =  \theta(k^{-1})F(g,hh_0)$, for all $g \in G$,  $h,h_0\in H$ and $k \in K$.  Being the Right Hand Side 
as in \eqref{H31}, we have that the isomorphism in \eqref{indstage} is given by the map
\begin{equation}\label{indstage2}
F\mapsto f
\end{equation}
where $f(g) = F(g,1_G)$ for all $g \in G$ (note that $F$ is uniquely determined  by $f$, because $F(g,h) = f(gh)$ for all $g\in G$ and $h \in H$).

\begin{theorem}\label{GZt3}
Under the isomorphism \eqref{indstage2}, we have 
\[
\left[(L_2L_1)^*\right]^\vee W\leq \Ind_H^G\left[(L_1^*)^\vee U\right].
\]
\end{theorem}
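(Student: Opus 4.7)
The plan is to unwind both sides explicitly using the definition of $\vee$ from Section~\ref{SecorthFrob} and then check the inclusion pointwise.

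First I would write out $[((L_2L_1)^*)^\vee w](g)$ for $w\in W$, $g\in G$. By definition of $\vee$ applied to $(L_2L_1)^*=L_1^*L_2^*$, this is
\[
[((L_2L_1)^*)^\vee w](g)=\frac{1}{\sqrt{|G/K|}}L_1^*L_2^*\sigma(g^{-1})w.
\]
Under the transitivity isomorphism \eqref{indstage2}, the function $f\in\Ind_K^G V$ corresponds to $F\in \Ind_H^G[\Ind_K^HV]$ with $[F(g)](h)=f(gh)$. So it suffices to show that, writing $f_w=((L_2L_1)^*)^\vee w$ and $F_w(g)(h):=f_w(gh)$, each $F_w(g)$ lies in the subspace $(L_1^*)^\vee U\leq \Ind_K^HV$.

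The key step is to compute $F_w(g)(h)$ and exploit the $H$-equivariance of $L_2$. Since $L_2\in\Hom_H(U,\Res_H^GW)$, by \eqref{adjoint} we have $L_2^*\sigma(h^{-1})=\rho(h^{-1})L_2^*$ for all $h\in H$. Hence, setting $u_g:=L_2^*\sigma(g^{-1})w\in U$,
\[
F_w(g)(h)=f_w(gh)=\frac{1}{\sqrt{|G/K|}}L_1^*\rho(h^{-1})L_2^*\sigma(g^{-1})w=\frac{1}{\sqrt{|G/K|}}L_1^*\rho(h^{-1})u_g.
\]
Comparing with the definition of $\vee$ at the level $K\leq H$, namely $[(L_1^*)^\vee u_g](h)=\frac{1}{\sqrt{|H/K|}}L_1^*\rho(h^{-1})u_g$, we obtain
\[
F_w(g)=\sqrt{\frac{|H/K|}{|G/K|}}\,(L_1^*)^\vee u_g=\frac{1}{\sqrt{|G/H|}}\,(L_1^*)^\vee u_g\in(L_1^*)^\vee U.
\]

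Since this holds for every $g\in G$, the function $F_w$ takes values in $(L_1^*)^\vee U$, so $F_w\in\Ind_H^G[(L_1^*)^\vee U]$, and translating back via \eqref{indstage2} gives $((L_2L_1)^*)^\vee w\in \Ind_H^G[(L_1^*)^\vee U]$, which is the desired inclusion. The only potential pitfall is bookkeeping: one must keep straight which $\vee$-map is being used at each stage (the one for $K\leq G$ on the left, the one for $K\leq H$ on the right) and track the normalization constants $\sqrt{|G/K|}$, $\sqrt{|H/K|}$, $\sqrt{|G/H|}$; but once the $H$-intertwining property of $L_2$ is applied to push $\sigma(h^{-1})$ through $L_2^*$, the identification is immediate.
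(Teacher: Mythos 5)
Your proof is correct and follows essentially the same route as the paper's: unwind $[((L_2L_1)^*)^\vee w](gh)$, use the $H$-equivariance of $L_2^*$ (from \eqref{adjoint}) to pull $\sigma(h^{-1})$ across, and recognize the result as $(L_1^*)^\vee$ applied to $u_g=L_2^*\sigma(g^{-1})w$ up to the normalization $\frac{1}{\sqrt{|G/H|}}$. The paper packages the same observation by writing $u_g=[(L_2^*)^\vee w](g)$, which absorbs that constant, but the computations and the key intertwining step are identical.
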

\begin{proof}
The space $\Ind_H^G[(L_1^*)^\vee U]$ is made up of all functions $F\in \Ind_H^G[\Ind_K^HV]$ such that, for every fixed $g\in G$, the function $h\mapsto F(g,h)$ belongs to $(L_1^*)^\vee U$, i.e.  there exists $u_g \in U$ such that 
\begin{equation}\label{indstage3}
F(g,h) = \left[(L_1^*)^\vee u_g\right](h).
\end{equation}
For $w \in W$, $g\in G$ we have:
\[
\left\{\left[(L_2L_1)^*\right]^\vee w\right\}(g) = \frac{1}{\sqrt{|G/K|}}L_1^*L_2^*\sigma(g^{-1})w
\]
and therefore
\[
\begin{split}
\left\{\left[(L_2L_1)^*\right]^\vee w\right\}(gh) = & \frac{1}{\sqrt{|G/K|}}L_1^*L_2^*\sigma(h^{-1})\sigma(g^{-1}) w\\
\mbox{($L_2 \in \Hom_H(U,\Res_H^GW)$})\ \ \ \ \ \ \ & = \frac{1}{\sqrt{|G/K|}}L_1^*\rho(h^{-1})[L_2^*\sigma(g^{-1})w]\\
& = \frac{\sqrt{|H/K|}}{\sqrt{|G/K|}}\left\{(L_1^*)^\vee[L_2^*\sigma(g^{-1})w]\right\}(h)\\
& = \left\{(L_1^*)^\vee[(L_2^*)^\vee w](g)\right\}(h).
\end{split}
\]
This means that, with respect to \eqref{indstage2}, the function $f = {(L_2L_1)^*}^\vee w \in \Ind_K^GV$ corresponds to an $F(g,h)$ of the form \eqref{indstage3}, with $u_g =[ ({L_2^*)^\vee w}](g)$. Therefore,  $f\in \Ind_H^G[(L_1^*)^\vee U]$.
\end{proof}

Suppose now that there exists a chain of subgroups of $G$ of the form
\begin{equation}\label{diamond1}
K = H_1 \leq H_2\leq \cdots \leq H_{m-1} \leq H_m = G.
\end{equation}
Define recursevely $J_\ell\subseteq  \widehat{H_\ell}$, $1\leq \ell\leq m$, by setting $J_1 =\{\theta\}$ and 
$J_{\ell+1}$ equal to  the set of  all $\eta \in \widehat{H_{\ell+1}}$ such that $\eta$ is contained in $\Ind_{H_\ell}^{H_{\ell+1}}\rho$, for some $\rho\in J_\ell$, $\ell = 1,2,\ldots, m-1$. We say that the chain 
\eqref{diamond1} satisfies the {\it Gelfand-Tsetlin} condition if for all $1\leq \ell\leq m-1$, $\rho \in J_\ell$ and 
$\eta \in J_{\ell+1}$ the multiplicity of $\eta\in \Ind_{H_\ell}^{H_{\ell+1}}\rho$ (equivalently, the multiplicity of $\rho$ in $\Res_{H_\ell}^{H_{\ell+1}}\eta$) is at most 1;  we write $\eta\to \rho$ when the multiplicity is equal to 1.
If the Gelfand-Tsetlin condition is satisfied, the associated {\it Bratteli diagram} is the finite  oriented graph whose vertex set is $\coprod_{\ell =1}^mJ_\ell$ and the edge set is formed by the pairs $(\eta,\rho)$ such that $\eta \to \rho$.  A {\it path} in the Bratteli diagram is a sequence $C: \rho_m \to \rho_{m-1}\to \cdots\to \rho_2\to \rho_1$,
where $\rho_1 = \theta$ and $\rho_m\in J$. For every $\sigma \in J$, we denote by $\mathcal{P}(\sigma)$ the set of all paths $C: \rho_m \to \rho_{m-1}\to \cdots\to \rho_2\to \rho_1$ such that $\rho_m = \sigma$. Fix now $\sigma \in J$ and denote  by $W$ its representing space.  We define recursively a chain of subspaces
\[
W_m\geq W_{m-1}\geq \cdots\geq W_2\geq W_1
\]
as follows. We set $W_m = W$ and for $\ell = m-1, m-2, \ldots, 1$, we denote by $W_\ell$ the {\it unique} subspace of $\Res^{H_{\ell+1}}_{H_\ell}W_{\ell+1}$ isomorphic to the representation space of $\rho_\ell$. This way, $W_1 \sim V$ as a $K$-representation; we set $V_C = W_1$.  If $\widetilde{C}: \widetilde{\rho}_m\to \widetilde{\rho}_{m-1}\to \cdots\to \widetilde{\rho}_2\to \widetilde{\rho}_1$ is a different path in $\mathcal{P}(\sigma)$, then there exists  $2 \leq \ell\leq m$ such that $\rho_i \sim \widetilde{\rho}_i$ for $i = m, m-1, \ldots, \ell$  and 
$\rho_{\ell-1}\not\sim \widetilde{\rho}_{\ell-1}$. Therefore, if $\widetilde{W}_m\geq \widetilde{W}_{m-1}\geq \cdots\geq\widetilde{W}_2\geq\widetilde{W}_1$ is the chain of subspaces associated with $\widetilde{C}$ then  $W_i =\widetilde{W}_i$, $i = m, m-1, \ldots, \ell$,  but 
$W_{\ell-1}$ and $\widetilde{W}_{\ell-1}$ are orthogonal, because they afford inequivalent representations. This implies that also $V_C$ and $V_{\widetilde{C}}$ are orthogonal.  Finally, by induction on $m$, it is easy to prove that 
\begin{equation}\label{stellaX1}
\bigoplus_{C \in \mathcal{P}(\sigma)}V_C
\end{equation}
is an orthogonal decomposition of the $\theta$-isotypic component of $\Res^G_KW$. Let $L_{\sigma, C}\in \Hom_K(V,\Res^G_KW)$ be an isometry with $L_{\sigma,C}V = V_C$. The operator $L_{\sigma,C}: V \to W$ is defined up to a complex constant of modulus 1 (the {\it phase factor}) and, by Lemma \ref{lemma1} the set 
\begin{equation}\label{stellaX12}
\{L_{\sigma,C}: C \in \mathcal{P}(\sigma)\}
\end{equation}
is an orthonormal basis for $\Hom_K(V, \Res_K^G W)$.

Similarly, with each $C\in \mathcal{P}(\sigma)$, $C:\rho_m\to \rho_{m-1}\to \cdots, \rho_2 \to \rho_1$, we can 
associate  the following sequence of spaces: $Z_1 = V$, and recursively, $Z_{\ell+1}$ is the unique subspace of $\Ind_{H_\ell}^{H_{\ell+1}}Z_\ell$, that affords $\rho_{\ell+1}$; finally, we set $W_C= Z_m$. Clearly, $W_C$ is a subspace of $\Ind_K^G V$ and 
\begin{equation}\label{stellaX13}
\bigoplus_{C \in \mathcal{P}(\sigma)}W_C
\end{equation}
is an orthogonal decomposition of the $\sigma$-isotypic component of $\Ind_K^G V$. Indeed, we have 
\[
\Ind_K^GV = \Ind_{H_{m-1}}^{H_m}\Ind_{H_{m-2}}^{H_{m-1}}\cdots \Ind_{H_1}^{H_2} V
\]
and at each stage  the induction is {\it multiplicity free}.

We now show that the decomposition \eqref{stellaX1} and \eqref{stellaX13} are closely related as in Corollary \ref{corollario2}.
\begin{theorem}
The orthonormal basis
\[
\left\{\sqrt{\frac{d_\sigma}{d_\theta}}(L_{\sigma,C}^*)^\vee: C \in \mathcal{P}(\sigma)\right\}
\]
of $\Hom_G(W, \Ind_K^G V)$ gives rise precisely to the isometric orthogonal decomposition \eqref{stellaX13}, that is 
\[
W_C = \sqrt{\frac{d_\sigma}{d_\theta}}(L_{\sigma,C}^*)^\vee W
\]
for every $C \in \mathcal{P}(\sigma)$.
\end{theorem}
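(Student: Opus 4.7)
The plan is to induct on the length $m$ of the chain \eqref{diamond1}. The base case $m=2$ (so $K = H_1 \leq H_2 = G$) is immediate from Corollary~\ref{corollario2}: a path $C \in \mathcal{P}(\sigma)$ reduces to a single arrow $\sigma \to \theta$, the operator $L_{\sigma,C}\colon V \hookrightarrow W$ is an isometric embedding of $V$ onto a copy of $\theta$ in $\Res_K^G W$, and $\sqrt{d_\sigma/d_\theta}\,(L_{\sigma,C}^*)^\vee W$ is a $d_\sigma$-dimensional $G$-subspace of $\Ind_K^G V$ of type $\sigma$, which must coincide with $W_C = Z_2$ because the Gelfand-Tsetlin condition on this two-link chain forces $\sigma$ to appear in $\Ind_K^G V$ with multiplicity at most one.

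For the inductive step set $H := H_{m-1}$, $\rho := \rho_{m-1}$, and let $U := W_{m-1}$ be the representing space of $\rho$. The isometric embedding attached to $C$ factors as $L_{\sigma, C} = L_2 L_1$, where $L_1 := L_{\rho, C'} \in \Hom_K(V, \Res_K^H U)$ corresponds to the truncated path $C'\colon \rho_{m-1} \to \cdots \to \rho_1$ and $L_2 \in \Hom_H(U, \Res_H^G W)$ is the isometric inclusion $U \hookrightarrow W$. Applying the inductive hypothesis to $\rho$ and $C'$ over the shorter chain $K \leq \cdots \leq H$ identifies $W_{C'}$ with the subspace $(L_1^*)^\vee U$ of $\Ind_K^H V$ (up to the scalar $\sqrt{d_\rho/d_\theta}$, which does not affect the subspace). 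Theorem~\ref{GZt3}, together with the transitivity isomorphism \eqref{indstage2}, then yields
\[
\bigl((L_2 L_1)^*\bigr)^\vee W \;\subseteq\; \Ind_H^G\bigl[(L_1^*)^\vee U\bigr] \;=\; \Ind_H^G W_{C'}.
\]
The left-hand side is the image of the irreducible $G$-module $W$ of type $\sigma$ under a $G$-intertwiner, hence a copy of $\sigma$ inside $\Ind_H^G W_{C'}$. By the Gelfand-Tsetlin hypothesis $\sigma$ appears in $\Ind_H^G \rho$ with multiplicity at most one, so $W_C = Z_m$ is the \emph{unique} copy of $\sigma$ inside $\Ind_H^G W_{C'}$; the inclusion above therefore becomes an equality, and rescaling by $\sqrt{d_\sigma/d_\theta}$ completes the induction.

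The principal conceptual point is that $V_C$ is built from $W$ by iterated \emph{restrictions}, whereas $W_C$ is built from $V$ by iterated \emph{inductions}, so the two recursive constructions proceed in opposite directions; Theorem~\ref{GZt3} is precisely the device that propagates the correspondence through one stage at a time, and the Gelfand-Tsetlin multiplicity-free hypothesis is what upgrades each inclusion it supplies into an equality. Without the multiplicity-free condition one could only conclude that $(L_{\sigma,C}^*)^\vee W$ lies inside the $\sigma$-isotypic component of $\Ind_H^G W_{C'}$, which is why the Gelfand-Tsetlin framework is essential here.
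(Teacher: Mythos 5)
Your proof is correct and follows essentially the same approach as the paper, which merely states that the result ``follows from Corollary \ref{corollario2} and a repeated application of Theorem \ref{GZt3}, by induction on $m$.'' You have spelled out the induction the authors leave implicit: the base case via the multiplicity-one condition, the factorization $L_{\sigma,C}=L_2L_1$ through the penultimate stage, the inclusion from Theorem \ref{GZt3}, and the upgrade of that inclusion to an equality using the Gelfand--Tsetlin multiplicity-free hypothesis.
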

\begin{proof}
It follows from Corollary \ref{corollario2} and a repeated application of Theorem \ref{GZt3},  by induction on $m$.
\end{proof}


\qquad\\
\qquad\\
\noindent
FABIO SCARABOTTI, Dipartimento SBAI, Universit\`a degli Studi di Roma ``La Sapienza'', via A. Scarpa 8, 00161 Roma (Italy)\\
{\it e-mail:} {\tt scarabot@sbai.uniroma1.it}\\

\noindent
FILIPPO TOLLI, Dipartimento di Matematica e Fisica, Universit\`a Roma TRE, L. San Leonardo Murialdo 1, 00146 Roma, Italy
{\it e-mail:} {\tt tolli@mat.uniroma3.it}\\


\begin{thebibliography}{99}
\bibitem{Bump} D. Bump, {\it Lie groups}. Graduate Texts in Mathematics, 225. Springer-Verlag, New York, 2004.
\bibitem{tree}  T. Ceccherini-Silberstein, F. Scarabotti and F. Tolli,  Trees, wreath products and finite Gelfand pairs, {\it Adv. Math.}, {\bf 206} (2006), no. 2, 503--537.
\bibitem{GelGeorg}T. Ceccherini-Silberstein,  F. Scarabotti and F. Tolli, Finite Gelfand pairs and their applications to probability and statistics, {\it  J. Math. Sci. (N. Y.)} {\bf 141} (2007), no. 2, 1182--1229.
\bibitem{book} T. Ceccherini-Silberstein, F. Scarabotti and F. Tolli, {\it Harmonic analysis on finite groups:
representation theory, Gelfand pairs and Markov chains.}  
Cambridge Studies in Advanced Mathematics { 108}, Cambridge University Press 2008.
\bibitem{Mackey} T. Ceccherini-Silberstein, A. Mach\`i, F. Scarabotti, F. Tolli,  Induced representations and Mackey theory, Functional analysis, {\it J. Math. Sci. (N. Y.)} {\bf 156} (2009), no. 1, 11--28.
\bibitem{Clifford}T. Ceccherini-Silberstein, F. Scarabotti and F. Tolli, Clifford theory and applications,
 Functional analysis, {\it J. Math. Sci. (N. Y.)} {\bf 156} (2009), no. 1, 29--43.
\bibitem{wreath} T. Ceccherini-Silberstein, F. Scarabotti and F. Tolli, Representation theory of wreath
products of finite groups,  Functional analysis, {\it J. Math. Sci. (N. Y.)} {\bf 156} (2009), no. 1, 44--55.
\bibitem{book2} T. Ceccherini-Silberstein, F. Scarabotti and F. Tolli, {\it Representation theory of the symmetric groups: the Okounkov-Vershik approach, character formulas, and partition algebras.}  
Cambridge Studies in Advanced Mathematics { 121}, Cambridge University Press 2010.
\bibitem{CURFOS} C.~W. Curtis, T.~V Fossum, On centralizer rings and characters of representations of finite groups, {\it Math. Z.} {\bf 107} (1968) 402--406.
\bibitem{CR2} Ch.~ W. Curtis and I. Reiner,{ \it  Methods of Representation Theory. With Applications to Finite Groups
and Orders}. Vols. I, Pure Appl. Math., John Wiley \& Sons, New York 1981.
\bibitem{DD1} D. D'Angeli and A. Donno, Crested products of Markov chains, {\it  Ann. Appl. Probab.}, {\bf  19}, no. 1, (2009),
414--453.
\bibitem{DD2} D. D'Angeli and A. Donno, Markov chains on orthogonal block structures, {\it  European J. Combin.}, {\bf  31},
Issue 1 (2010), 34--46.
\bibitem{DD3} D. D'Angeli and A. Donno, Generalized crested products, {\it  European J. Combin.}, {\bf  32}, Issue 2 (2011), 243--257.
\bibitem{Diaconis} P. Diaconis, {\it Groups Representations in Probability and Statistics.} IMS Hayward, CA, 1988.
\bibitem{Faraut} J. Faraut,  Analyse harmonique sur les paires de Gelfand et les espaces hyperboliques, in {\it Analyse harmonique}, J.~L. Clerc, P. Eymard, J. Faraut,  M.  Ra\'es, R. Takahasi, Eds (\'Ecole d'\'et\'e d'analyse harmonique Universit\'e de Nancy I), C.I.M.P.A. V, 1983.
\bibitem{FN} A. Fig\`a-Talamanca, C. Nebbia, {\it Harmonic Analysis and Representation Theory for Groups Acting on Homogeneous Trees.} London Mathematical Society Lecture Notes Series, 162. Cambridge University Press, Cambridge, 1991.
\bibitem{Macdonald} I.~G. Macdonald, {\it Symmetric functions and Hall Polynomials}, second edition. Oxford University Press, 1995.
\bibitem{MIZ000} H. Mizukawa, Zonal spherical functions on the complex reflection groups and (n+1,m+1)-hypergeometric
functions, {\it  Adv. Math.} {\bf 184} (2004) 1--17.
\bibitem{Mizukawa} H. Mizukawa, {Twisted Gelfand pairs of complex reflection groups and $ r$-congruence properties of Schur functions}, {\it Ann. Comb.} {\bf 15} (2011), no. 1,109--125.
\bibitem{NS} M.A. Naimark and A.I. Stern, {\it Theory of Group Representations.} Springer-Verlag, New York, 1982.
\bibitem{Nebe} G. Nebe, Orthogonal Frobenius reciprocity, {\it  J. Algebra} {\bf 225} (2000), no. 1, 250--260.
\bibitem{OV} A. Okounkov and A.~M. Vershik,  A new approach to the representation theory of  symmetric groups. II (Russian),
{\it Zap.Nauchn. Sem. S.Petersburg. Otdel. Mat. Inst. Steklov.} (POMI) {\bf 307} (2004), Teor. Predst. Din. Sist. Komb. i Algoritm. Metody. 10,  57--98, 281;  translation in {\it J. Math. Sci.}  (N.Y.) {\bf 131} (2005), no. 2, 5471--5494.
\bibitem{ScarabottiLapBer} F. Scarabotti, Time to reach stationarity in the Bernoulli-Laplace diffusion model
with many urns, {\it Adv. in Appl. Math.} {\bf 18} (1997), no. 3, 351--371.
\bibitem{st1}      F. Scarabotti and F. Tolli, Harmonic analysis on a finite homogeneous space, {\it Proc. Lond. Math. Soc.}(3) {\bf{100}} (2010),  no. 2,
348--376.
\bibitem{st2}      F. Scarabotti and F. Tolli, Harmonic analysis on a finite homogeneous space II: the Gelfand Tsetlin decomposition,  {\it 
Forum Mathematicum}  {\bf{22}} (2010), 897--911.
\bibitem{st3}  F. Scarabotti and F. Tolli, Fourier analysis of subgroup-conjugacy invariant functions on finite groups, to appear in {\it Monoshafte fur Mathematik}.
\bibitem{st4}  F. Scarabotti and F. Tolli, Hecke algebras and harmonic analysis on  finite groups, to appear in {\it Rendiconti di Matematica}.
\bibitem{Stembridge}  J.~R. {Stembridge}, On Schur's Q-functions and the primitive idempotents of a commutative Hecke algebra, {\it J. Algebr. Comb.} {\bf 1} (1992) 71--95.
\bibitem{Sternberg} S. Sternberg, {\it Group theory and physics.} Cambridge University Press, Cambridge, 1994.
\bibitem{Terras} A. Terras, {\it Fourier analysis on finite groups and applications}.
London Mathematical Society Student Texts, 43. Cambridge University Press, Cambridge, 1999.
\end{thebibliography}
\end{document}